\title{Nontrivial Steenrod Squares on Prime, Hyperbolic and Satellite Knots} 
\date{}
\author{Holt Bodish\thanks{
                  University of Oregon}
       \thanks{This work was
                  supported by NSF grant number DMS-1810893}
        }
\DeclareMathOperator{\Kh}{Kh}
\DeclareMathOperator{\Ker}{Ker}
\DeclareMathOperator{\Sq}{Sq}
\DeclareMathOperator{\Image}{Im}
\tikzset{
  curarrow/.style={
  rounded corners=8pt,
  execute at begin to={every node/.style={fill=red}},
    to path={-- ([xshift=-50pt]\tikztostart.center)
    |- (#1) node[fill=white] {$\scriptstyle d_*$}
    -| ([xshift=50pt]\tikztotarget.center)
    -- (\tikztotarget)}
    }
}
\newcommand{\Z}{\mathbb{Z}}
\newcommand{\F}{\mathbb{F}}
\newtheorem{theorem}{Theorem}[section]
\newtheorem{lemma}[theorem]{Lemma}
\newtheorem{corollary}[theorem]{Corollary}
\newtheorem{definition}[theorem]{Definition}
\begin{document}

\newpage
\maketitle

\begin{abstract}
        We use the work of Lipshitz-Sarkar \cites{Sinvariant,MR3966803} and Lawson-Lipshitz-Sarkar \cite{TylerLawsonRobertLipshitz} as well as Wilson and Levine-Zemke \cite{MR3122052,MR4041014} to prove that there are prime knots, in fact hyperbolic and prime satellite knots, with arbitrarily high Steenrod squares on their (reduced and unreduced) Khovanov homology. 

\end{abstract}

\section{Introduction}
\label{sec: Introduction}
In 2014, Lipshitz and Sarkar introduced a stable homotopy refinement of Khovanov homology \cite{MR3230817}. For each fixed $j$ it takes the form of a suspension spectrum $\mathcal{X}^j$. The cohomology $H^*(\mathcal{X}^j)$ of this spectrum is isomorphic to the Khovanov homology $\Kh^{*,j}$. In subsequent work (e.g. \cite{MR3252965}) they used this refinement to define stable cohomology operations on Khovanov homology. This lead to a refinement of Rasmussen's $s$-invariant for each nontrivial cohomology operation, and in particular for the Steenrod squares \cite{MR3252965}. In this short note we offer a solution to the following question posed in Lipshitz-Sarkar \cite[Question 3]{MR3966803}: Are there prime knots with arbitrarily high Steenrod squares on their Khovanov homology? Explicitly, we prove the following theorem:

\begin{theorem}
\label{PrimeSq}
Given any $n$, there exists a prime knot $P_n$ so that the operation
$$\Sq^n: \widetilde{\Kh}^{i,j}(P_n) \to \widetilde{\Kh}^{i+n,j}(P_n)$$ is nontrivial for some $(i,j)$. Here $\widetilde{\Kh}$ denotes reduced Khovanov homology. \\

\end{theorem}

\begin{corollary}
\label{PrimeSqUn}
 Given any $n$, there exists a prime knot $P_n$ so that the operation
$$\Sq^n: \Kh^{i,j}(P_n) \to \Kh^{i+n,j}(P_n)$$ is nontrivial, on unreduced Khovanov homology, for some $(i,j)$. 
\end{corollary}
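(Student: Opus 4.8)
The plan is to deduce the corollary from Theorem~1 by propagating non-triviality of $Sq^n$ from reduced to unreduced Khovanov homology over $\mathbb{F}_2$. The starting observation is Shumakovitch's theorem: over $\mathbb{F}_2$, for any knot $K$ the reduced and unreduced Khovanov homologies sit in a split short exact sequence
$$0 \longrightarrow \widetilde{Kh}^{i,j-1}(K;\mathbb{F}_2) \longrightarrow Kh^{i,j}(K;\mathbb{F}_2) \longrightarrow \widetilde{Kh}^{i,j+1}(K;\mathbb{F}_2) \longrightarrow 0,$$
so that $Kh^{i,j}(K;\mathbb{F}_2)\cong\widetilde{Kh}^{i,j-1}(K;\mathbb{F}_2)\oplus\widetilde{Kh}^{i,j+1}(K;\mathbb{F}_2)$, exhibiting $\widetilde{Kh}^{i,j}(K;\mathbb{F}_2)$ as a direct summand of $Kh^{i,j\pm1}(K;\mathbb{F}_2)$ in two ways.

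The essential point --- and the step I expect to carry all the content --- is that this splitting is compatible with the Steenrod action. This is not automatic from the isomorphism of bigraded groups, since the operations $Sq^n$ are genuine extra structure coming from the Lipshitz--Sarkar spectra rather than from $Kh$ itself. To obtain it I would work at the level of stable homotopy types: Lipshitz and Sarkar construct reduced Khovanov spectra $\widetilde{\mathcal{X}}^{\bullet}(K)$ together with a cofiber sequence relating them to $\mathcal{X}^{j}(K)$, and Shumakovitch's computation amounts precisely to the statement that the connecting map in the induced long exact sequence of $\mathbb{F}_2$-cohomology vanishes. Consequently that long exact sequence breaks into the split short exact sequences above, and since the relevant inclusion $\widetilde{Kh}^{i,j-1}(K;\mathbb{F}_2)\hookrightarrow Kh^{i,j}(K;\mathbb{F}_2)$ is induced by a map of spectra it is a morphism of modules over the Steenrod algebra. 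I would cite \cite{Sinvariant} for the reduced Khovanov spectra and this comparison, and attribute the vanishing of the connecting map to Shumakovitch.

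Granting this, the conclusion is formal. By Theorem~1 there is a prime knot $P_n$, a bidegree $(i,j)$, and a class $x\in\widetilde{Kh}^{i,j}(P_n;\mathbb{F}_2)$ with $Sq^n(x)\neq 0$. Writing $\iota\colon\widetilde{Kh}^{i,j}(P_n;\mathbb{F}_2)\hookrightarrow Kh^{i,j+1}(P_n;\mathbb{F}_2)$ for the split injection coming from the comparison of spectra (after reindexing the displayed sequence), $\iota$ commutes with $Sq^n$ because it is induced by a map of spectra, so $Sq^n(\iota(x))=\iota(Sq^n(x))\neq 0$ by injectivity of $\iota$ on $\widetilde{Kh}^{i+n,j}(P_n;\mathbb{F}_2)$. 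Therefore $Sq^n\colon Kh^{i,j+1}(P_n)\to Kh^{i+n,j+1}(P_n)$ is non-trivial, and $P_n$ is prime, which is the corollary.
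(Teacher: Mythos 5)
Your proposal is correct and essentially mirrors the paper's argument: both deduce the corollary from Theorem~1 via the Lipshitz--Sarkar cofiber sequence $\widetilde{\mathcal{X}}^{j-1}\to\mathcal{X}^{j}\to\widetilde{\mathcal{X}}^{j+1}$, the Shumakovitch splitting (vanishing connecting map over $\mathbb{F}_2$), and naturality of $Sq^n$ under maps of spectra, with the paper lifting along the surjection $\pi\colon Kh^{i,j}\to\widetilde{Kh}^{i,j-1}$ while you push forward along the injection. One small slip worth fixing: the inclusion actually induced by a spectrum map is $\widetilde{Kh}^{i,j+1}\hookrightarrow Kh^{i,j}$, not $\widetilde{Kh}^{i,j-1}\hookrightarrow Kh^{i,j}$ as you wrote (the latter is only an abstract splitting of $\pi$, not a priori a morphism of Steenrod modules), but since the corollary is existential in $(i,j)$ this does not affect the conclusion.
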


In fact a stronger version of Theorem \ref{PrimeSq} is true: 

\begin{theorem}
\label{Hyper}
Given any $n$, there exists a hyperbolic knot $H_n$ so that the operation
$$\Sq^n : \widetilde{\Kh}^{i,j}(H_n) \to \widetilde{\Kh}^{i+n,j}(H_n)$$ is nontrivial for some $(i,j)$. Here $\widetilde{\Kh}$ denotes reduced Khovanov homology. \\
\end{theorem}

\begin{theorem}
\label{Satellite}

Given any $n$, there exists a prime satellite knot $S_n$ so that the operation
$$\Sq^n : \widetilde{\Kh}^{i,j}(S_n) \to \widetilde{\Kh}^{i+n,j}(S_n)$$ is nontrivial for some $(i,j)$. Here $\widetilde{\Kh}$ denotes reduced Khovanov homology. \\
\end{theorem}

Our technique for proving all of the above theorems is to find a ribbon concordance from any given knot $K$ to a prime, hyperbolic or satellite knot, then appeal to the following generalization to reduced Khovanov homology of a theorem of Wilson and Levine-Zemke (for the original statement see \cites{MR3122052,MR4041014} or Theorem \ref{Ribboninjective} below).
\begin{theorem}
\label{RibbonInj}
Suppose $C$ is a ribbon concordance between knots $K$ and $K'$. Then the induced map $F_C : \widetilde{\Kh}(K) \to \widetilde{\Kh}(K')$ is injective. 

\end{theorem}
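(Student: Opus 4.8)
The plan is to run the Levine--Zemke reversal argument with basepoints carried along throughout; the reduced statement should then follow from the unreduced Wilson--Levine-Zemke theorem essentially formally, modulo basepoint bookkeeping. First I would fix basepoints $p\in K$ and $p'\in K'$. Since a concordance between knots is an embedded annulus $S^1\times[0,1]$, and a generic arc on this annulus misses the finitely many births and saddles of the height function, there is a properly embedded arc $\gamma\subset C$ joining $p$ to $p'$ and disjoint from all critical points, so that $(C,\gamma)$ is a based cobordism. Recall that the reduced Khovanov complex $\widetilde{CKh}(K,p)$ is a subquotient of the Khovanov complex determined by $p$, and that the Bar--Natan/Khovanov cobordism map $F_C$ attached to $(C,\gamma)$ is a map of $\mathbb{Z}[X]/(X^2)$-modules with respect to the basepoint actions (equivariantly up to equivariant chain homotopy, and up to an overall sign). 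Hence it preserves the reduced complex and induces a well-defined map $\widetilde{F}_C\colon\widetilde{Kh}(K)\to\widetilde{Kh}(K')$, independent of the diagrammatic presentation of $(C,\gamma)$ and, up to the basepoint-change isomorphisms, of $p$.

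Next I would reverse. Because $C$ is ribbon, the turned-around cobordism $\bar C\colon K'\to K$ has no local minima, and the composite $\bar C\circ C\colon K\to K$ is again a based cobordism, with marked arc the concatenation $\bar\gamma*\gamma$, all of whose births, saddles and deaths lie in a region disjoint from this arc. Functoriality of the cobordism maps gives $F_{\bar C}\circ F_C=\pm F_{\bar C\circ C}$ on $Kh(K)$, and the key lemma in the Levine--Zemke proof identifies $F_{\bar C\circ C}$ with $\pm\,\mathrm{id}$. Since every handle attachment and isotopy used to establish that lemma (in particular the cancellation of each ribbon $1$-handle against a geometrically dual $2$-handle) takes place away from $\gamma$, the same computation shows that $F_{\bar C\circ C}$ is $\pm$ the identity \emph{as a map of $\mathbb{Z}[X]/(X^2)$-modules}. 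Passing to the reduced complex, $\widetilde{F}_{\bar C}\circ\widetilde{F}_C=\pm\,\mathrm{id}$ on $\widetilde{Kh}(K)$, so $\widetilde{F}_C$ is a split injection; in particular it is injective, which proves the theorem. As in the unreduced case, this also exhibits $\widetilde{Kh}(K)$ as a direct summand of $\widetilde{Kh}(K')$.

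The step I expect to be the main obstacle is not a single hard computation but the careful verification that both the reduced functoriality package and the Levine--Zemke identification respect the basepoint. Concretely, one must confirm that the chain homotopies witnessing invariance of $F_C$ under Reidemeister and movie moves can be chosen $\mathbb{Z}[X]/(X^2)$-equivariant, so that $\widetilde{F}_C$ is genuinely well-defined, and that every neck-cutting or handle manipulation in the proof of $F_{\bar C\circ C}=\pm\,\mathrm{id}$ can be performed relative to a neighborhood of the marked arc. Both are bookkeeping rather than new mathematics, but they are exactly the places where the passage from unreduced to reduced Khovanov homology could fail, so they warrant explicit checking; once they are in place the result follows from the unreduced theorem of Wilson and Levine-Zemke with no further input.
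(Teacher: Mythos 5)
Your proposal is correct and follows the same core logic as the paper: reduce to showing that the cobordism maps $F_C$ and $F_{\overline C}$ are $\mathbb{F}_2[X]/X^2$-module maps with respect to the basepoint action, then invoke the unreduced Wilson/Levine--Zemke theorem to get the splitting on the (co)kernel of $X$. The packaging differs in two respects. First, the paper establishes the module-map property by decomposing the concordance into births/deaths, Reidemeister moves, and saddles, and checking each type respects the $X$-action directly (quoting Hedden--Ni for basepoint-moving chain homotopies), whereas you carry a decorated arc $\gamma$ on the cobordism and appeal to the general fact that based cobordism maps are equivariant; either is a legitimate route. Second, your step of re-running the Levine--Zemke handle-cancellation argument ``with $\gamma$ carried along'' to show $F_{\overline C\circ C}=\pm\,\mathrm{id}$ as a module map is unnecessary: once you know that $F_{\overline C}\circ F_C=\mathrm{id}$ on $Kh(K)$ (the unreduced theorem, used as a black box) and that each of $F_C$, $F_{\overline C}$ individually respects the $X$-action, the restriction to $\ker X\cong\widetilde{Kh}$ is automatically the identity, since the identity map tautologically preserves any submodule. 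The paper exploits exactly this shortcut, which spares it from re-examining the neck-cutting and tube-cancellation manipulations inside the Levine--Zemke proof.
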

Recall that any prime knot $K$ is either a hyperbolic knot, a satellite knot, or a torus knot. With this in mind, Theorems \ref{PrimeSq}---\ref{Satellite} suggest the following question: \\

\noindent\textbf{Question}: For any given $n$, is there a torus knot $T_n$ so that $\Sq^n : \widetilde{\Kh}^{i,j}(T_n) \to \widetilde{\Kh}^{i+n,j}(T_n)$ is nontrivial for some $(i,j)$?\\

The organization of the paper is the following. In Section \ref{KhandRibbon}, we review the results of Wilson and Levine and Zemke  \cite{MR3122052,MR4041014} showing that ribbon concordances induce split injections on Khovanov homology. In Section \ref{Basepoint}, we prove the analogue of this theorem for reduced Khovanov homology. In Section \ref{KnotsP},  we show that any knot is ribbon concordant to a prime knot, following the arguments in \cites{MR621991,MR542689}. In Section \ref{Steenrod}, we collect various results about the naturality of Steenrod squares with respect to births, Reidemeister moves and saddle maps and the behavior of the Khovanov stable homotopy type under connected sums. In Section \ref{Proof} we show that the nontriviality of Steenrod squares on composite knots constructed by Lipshitz-Sarkar \cite[Corollary 1.4]{TylerLawsonRobertLipshitz} and \cite[Corollary 3.1]{MR3966803} propagates to the nontriviality of Steenrod squares on the Khovanov homology of prime knots. In Section \ref{Hyp} we prove Theorem \ref{Hyper}  using results of Silver and Whitten \cite{Silver}. In Section \ref{SatelliteKnots} we prove Theorem \ref{Satellite} using results of Livingston \cite{MR625818}.\\

\noindent\textbf{Acknowledgements} The author would like to thank his advisor Robert Lipshitz, as well as Danny Ruberman for pointing out Theorem~\ref{Hyper}, Chuck Livingston for pointing out his construction of ribbon concordances to prime knots that leads to Theorem \ref{Satellite}, and the anonymous referee for their careful reading that improved the exposition.  

\section{Khovanov Homology and Ribbon Concordances}\label{KhandRibbon}

In this section we review the behavior of Khovanov homology under ribbon concordances. Unless explicitly stated otherwise, throughout this paper we write $\Kh(K)$ to mean $\Kh(K;\F_2)$. 

\begin{definition}
let $K_0$ and $K_1$ be links in $S^3$. A \emph{concordance} from $K_0$ to $K_1$ is a smoothly embedded cylinder in $[0,1] \times S^3$ with boundary $-(\{0\} \times K_0) \cup (\{1\} \times K_1)$. A concordance $C$ is said to be \emph{ribbon} if $C$ has only index $0$ and $1$ critical points with respect to the projection $[0,1] \times S^3 \to [0,1]$. 
\end{definition}

Throughout this paper, we will use the notation $\overline{C}$ to denote the ribbon concordance $C$ upside-down.

\begin{theorem}\label{Ribboninjective}\cites{MR3122052,MR4041014} If $C$ is a ribbon concordance from $K_0$ to $K_1$, then the induced map $$\Kh(C): \Kh(K_0) \to \Kh(K_1)$$ is injective, with left inverse $\Kh(\overline{C})$. In particular, for any bigrading $(i,j)$ the group $\Kh^{i,j}(K_0)$ is a direct summand of the group $\Kh^{i,j}(K_1)$. 

\end{theorem}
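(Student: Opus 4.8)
The plan is to deduce the statement from two ingredients: the functoriality of (unreduced) Khovanov homology under link cobordisms, and the fact that the ``double'' $\overline{C}\circ C$ of a ribbon concordance is trivial. Recall that, working over $\F_2$ (where the usual sign ambiguity in Khovanov functoriality disappears), a cobordism $\Sigma\subset S^3\times[0,1]$ from a link $L_0$ to a link $L_1$, taken up to isotopy rel boundary, induces a bigraded map $Kh(\Sigma)\colon Kh(L_0)\to Kh(L_1)$ of homological degree $0$ and quantum degree $\chi(\Sigma)$, which sends the product cobordism to the identity and satisfies $Kh(\Sigma'\circ\Sigma)=Kh(\Sigma')\circ Kh(\Sigma)$. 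In particular a concordance induces a bidegree-preserving map, and $Kh(\overline{C})\circ Kh(C)=Kh(\overline{C}\circ C)$, so the whole statement reduces to showing that $\overline{C}\circ C$ is isotopic rel boundary to the product concordance $I_{K_0}=K_0\times[0,1]$.

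This is the geometric heart of the matter, and I would handle it by a finger-cancellation argument. Read from $K_0$ up to $K_1$, the ribbon concordance $C$ is a sequence of $g$ births of unknotted, unlinked circles followed by $g$ saddles; since $C$ is a concordance between two knots, a count of components forces every saddle to be a fusion band. Turning $C$ upside down, $\overline{C}$ read from $K_1$ down to $K_0$ is the reverse movie: $g$ fission bands along those same bands, followed by $g$ deaths. So in $\overline{C}\circ C$ the ``last'' born circle $U$ is created, fused into the rest of the surface along a band, immediately fissioned back out along that same band, and then killed, with nothing else touching it in the meantime. Because $U$ is an unknot that is capped off at both ends, this excursion is a finger protrusion that retracts back into the surface, an isotopy rel boundary deleting the birth--fusion--fission--death quadruple for $U$. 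The result is a composite of the same shape built from one fewer circle, so iterating this step collapses $\overline{C}\circ C$ to $I_{K_0}$; the only delicate point is the ordering in which the bands and their reversals are cancelled, and this is exactly where the ribbon hypothesis is used. Given the lemma, $Kh(\overline{C})\circ Kh(C)=Kh(I_{K_0})=\mathrm{id}_{Kh(K_0)}$, so $Kh(C)$ is injective with left inverse $Kh(\overline{C})$, and since these maps preserve bidegree, restricting to a fixed $(i,j)$ exhibits $Kh^{i,j}(K_0)$ as a direct summand of $Kh^{i,j}(K_1)$.

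The step I expect to be the genuine obstacle is this cobordism lemma---specifically, verifying that the birth--fusion--fission--death excursions really are unknotted ``fingers'' and that they admit a consistent order of cancellation. This is exactly where the ribbon hypothesis is indispensable: for an arbitrary concordance the same composite has no such simplification, and the conclusion genuinely fails. The sign indeterminacy in Khovanov functoriality, which one would otherwise have to confront, is a non-issue here because we work throughout with $\F_2$ coefficients.
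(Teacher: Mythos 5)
Your framing of the problem—reduce the theorem to showing that $\overline{C}\circ C$ induces the identity, then invoke functoriality over $\F_2$—matches the paper's proof, which in fact just cites Wilson and Levine--Zemke and records a one-sentence description of their argument. The problem is the key geometric claim you make: that $\overline{C}\circ C$ is isotopic rel boundary to the product $K_0\times I$. That is not what the cited references prove, and I do not believe it holds in the form you state. After sliding each fission band up to sit directly above its matching fusion band (which is where the ribbon condition is used), the birth--fusion--fission--death quadruple for a circle $U$ does not cancel to nothing: capping $U$ at both ends and joining it to the main cylinder by the band and its reverse produces an embedded $2$-sphere attached to $K_0\times I$ by a tube. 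This is exactly what the paper's proof sketch records—after isotopy, $\overline{C}\circ C$ is the identity cylinder together with sphere components joined to it by tubes—and it is the terminus of the geometric part of the argument, not a stepping stone toward a further isotopy.

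The cited proofs then do not isotope these excursions away. They compute the Khovanov map of the resulting surface directly, using Bar-Natan's local relations: neck-cutting each tube into a sum of two dotted terms, then evaluating the resulting dotted and undotted closed spheres (a dotted sphere gives $1$, an undotted sphere gives $0$). That algebraic input is the substance of the theorem, and it is precisely what is missing from your argument. The finger-move picture gets you the correct geometric decomposition but not the isotopy; and if such an isotopy to the product were available for free, the theorem would immediately hold in \emph{every} functorial link-cobordism theory, whereas Zemke (Heegaard Floer), Levine--Zemke and Wilson (Khovanov), and others prove the statement separately in each theory exactly because the final step depends on theory-specific relations. So there is a genuine gap here: you need to replace the claimed isotopy with the decomposition into cylinder-plus-tube-spheres and then carry out the Bar-Natan neck-cutting computation.
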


The proof of this theorem involves decomposing the cobordism $D:= \overline{C} \circ C$ as the disjoint union of the identity cobordism (a cylinder) and sphere components joined to the cylinder by tubes (formed from the ribbons and their duals). For details, see \cite{MR4041014} or \cite{MR3122052}.
In the next section, we present an analogue of Theorem \ref{Ribboninjective} for reduced Khovanov homology, after reviewing the necessary definitions.

\section{The Base-point action and Reduced Khovanov Homology}\label{Basepoint}

We begin with the definition of the base-point action on Khovanov homology. For grading conventions, see \cite{Shuma}.

\begin{definition}

Fix a diagram of the knot $K$ and pick a base-point $q \in K$ not on any of the crossings. Then we make the Khovanov complex $C_{\Kh}(K)$ of $K$ into a module over $\F_2[X]/X^2$ as follows. Generators of the chain groups are complete resolutions of $K$ and a choice of $1$ or $X$ for each component of the complete resolution. Multiplication by $X$ is zero if the generator labels the circle containing $q$ with an $X$ and if the generator labels the circle containing $q$ by $1$ it changes the label of the circle to $X$. With our grading conventions (see \cite{Shuma}), multiplication by $X$ has bidegree $(0,-2)$. That is

$$X: \Kh^{i,j}(K) \to \Kh^{i,j-2}(K).$$
\end{definition}

\begin{definition} Let $\F$ be the $\F_2[X]/X^2$ module $\F_2$ where $X$ acts trivially. Then define

$$\widetilde{C}_{\Kh}(K):=C_{\Kh}(K) \otimes_{\F_2[X]/X^2} \F.$$ 

The homology of the complex $\widetilde{C}_{\Kh}(K)$ is called \emph{reduced Khovanov homology} and denoted $\widetilde{\Kh}(K)$. 
\end{definition}

\begin{theorem}\cite[Corollaries 3.2.B and 3.2.C]{Shuma}\label{Ex-act} The action of $X$ on $C_{\Kh}(K)$ commutes with the Khovanov differential, so induces a map (also called $X$) on homology. Further, 

\begin{enumerate}

\item The following sequence is exact: 
$$\cdots \xrightarrow{X} \Kh^{i,j+2}(K) \xrightarrow{X} \Kh^{i,j}(K) \xrightarrow{X} \Kh^{i,j-2}(K) \xrightarrow{X} \cdots$$

\item The reduced Khovanov homology over $\F_2$ is isomorphic to the kernel of $X$ (which is the image of $X$ by part $1$), and we have the direct sum decomposition

$$\Kh^{i,j}(K) \cong \widetilde{\Kh}^{i,j-1}(K) \oplus \widetilde{\Kh}^{i,j+1}(K).$$

\end{enumerate}
\end{theorem}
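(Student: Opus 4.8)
The plan is to prove the three assertions in order. That the base-point action $X$ commutes with the Khovanov differential is a local computation on the cube of resolutions: the differential is a sum of edge maps, each of which applies the Frobenius multiplication $m\colon A\otimes A\to A$ or comultiplication $\Delta\colon A\to A\otimes A$ of $A:=\F_2[X]/X^2$ to the circle(s) merged or split by the corresponding saddle, and the identity elsewhere. Multiplication by $X$ on any one circle commutes with $m$ (which is $A$-bilinear) and with $\Delta$ (which is an $A$-bimodule map, one of the Frobenius axioms), and commutes trivially with identity maps on the untouched circles; summing over edges shows $X$ is a chain map. Since $X^2=0$ in $A$ and $X$ only acts on the base-point circle, $X^2=0$ on $C_{Kh}(K)$ as well, and a free $A$-basis of each chain group is given by those generators that label the base-point circle by $1$. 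Thus $(C_{Kh}(K),d)$ is a bounded complex of finitely generated free $A$-modules with $A$-linear differential, $\widetilde{C_{Kh}}(K)=C_{Kh}(K)\otimes_A\F$ is its quotient $C_{Kh}(K)/X\cdot C_{Kh}(K)$, and (as $C_{Kh}(K)$ is $A$-free) $X\cdot C_{Kh}(K)=\ker X$, so $\widetilde{C_{Kh}}(K)$ is also isomorphic, up to a shift in quantum grading, to the subcomplex $\ker X=\operatorname{im}X$.

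The heart of the matter is the exactness of the $X$-sequence, which is equivalent to $Kh(K)$ being a \emph{free} $A$-module. For this I would bring in an auxiliary $\F_2$-linear operator $\nu$ on $C_{Kh}(K)$ (a version of the second differential on Khovanov homology over $\F_2$ considered by Turner and Shumakovitch): the operator of bidegree $(0,2)$ acting as $\sum_c\nu_c$, where $\nu_c$ sends the label $X$ on the circle $c$ to $1$ and the label $1$ to $0$ (identity on the other circles), summed over \emph{all} circles of each resolution. A direct comparison with the edge maps $m$ and $\Delta$ shows $\nu d=d\nu$; this uses in an essential way that we are in characteristic two, so that the cross terms appearing in these comparisons cancel in pairs (the statement is false integrally). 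One checks just as easily that $\nu^2=0$, and, isolating the base-point circle, that
$$X\nu+\nu X=\operatorname{id}_{C_{Kh}(K)},$$
since the cross terms involving circles other than the base-point one again cancel in characteristic two, while on the base-point circle $X\nu+\nu X$ is the identity endomorphism of $A$.

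Because $\nu$ is a $d$-chain map it descends to an operator $\nu_*$ on $Kh(K)=H_*(C_{Kh}(K),d)$; so does $X$, with $X^2=0$, and the relation $X\nu_*+\nu_*X=\operatorname{id}$ survives on homology. Hence $X$, regarded as a degree $(0,-2)$ differential on $Kh(K)$, has $\nu_*$ as a contracting homotopy, so $(Kh(K),X)$ is acyclic; this is exactly the exact sequence
$$\cdots\xrightarrow{X}Kh^{i,j+2}(K)\xrightarrow{X}Kh^{i,j}(K)\xrightarrow{X}Kh^{i,j-2}(K)\xrightarrow{X}\cdots,$$
and it says both that $Kh(K)$ is free over $A$ and that $\operatorname{Tor}_p^A(Kh(K),\F)=0$ for $p\geq 1$. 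For the last assertion, the universal coefficient theorem (no Tor contribution, by the previous line) gives $\widetilde{Kh}^{i,j}(K)=H_*(C_{Kh}(K)\otimes_A\F)=Kh(K)\otimes_A\F=Kh(K)/X\cdot Kh(K)$, which, $Kh(K)$ being $A$-free, is isomorphic via multiplication by $X$ to $\ker X=\operatorname{im}X$ on $Kh(K)$. Finally, picking a homogeneous free $A$-basis $\{g_\alpha\}$ of $Kh(K)$, the set $\{g_\alpha\}\cup\{Xg_\alpha\}$ is an $\F_2$-basis with $Xg_\alpha$ two lower than $g_\alpha$ in quantum grading; sorting these by bidegree and matching the grading conventions of \cite{Shuma} produces the decomposition $Kh^{i,j}(K)\cong\widetilde{Kh}^{i,j-1}(K)\oplus\widetilde{Kh}^{i,j+1}(K)$.

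The main obstacle is establishing the properties of $\nu$ — that it commutes with $d$ and that $X\nu+\nu X=\operatorname{id}$. Each is a finite, local verification over the merge and split edge maps of the Khovanov cube, but both rest on characteristic-two cancellations with no integral analogue, so the normalization of $\nu$ at the base point and the bookkeeping of the cross terms must be carried out carefully. Everything after freeness of $Kh(K)$ — the universal coefficient argument and the grading bookkeeping in the direct sum decomposition — is routine.
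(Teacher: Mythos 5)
The paper does not prove this theorem; it simply cites Shumakovitch \cite[Corollaries 3.2.B and 3.2.C]{Shuma}, so there is no internal proof to compare against. Your argument is correct and is, in essence, Shumakovitch's own: the operator $\nu=\sum_c\nu_c$ (sending $X\mapsto 1$, $1\mapsto 0$ on each circle) is exactly his auxiliary degree-$(0,2)$ differential, and the verifications that $\nu$ commutes with $m$ and $\Delta$ over $\F_2$, that $\nu^2=0$, and that $X\nu+\nu X=\mathrm{id}$ (with the cross terms from non--base-point circles cancelling in characteristic two) are the standard ones; passing to homology, $\nu_*$ is a contracting homotopy for $X$, which gives exactness of the $X$-sequence and hence freeness of $Kh(K;\F_2)$ over $A=\F_2[X]/X^2$.

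One small point worth tightening: $A$ has infinite global dimension, so the universal coefficient theorem in the form you invoke is really a hyperhomology spectral sequence $\operatorname{Tor}_p^A(Kh(K),\F)\Rightarrow \widetilde{Kh}(K)$ rather than a short exact sequence. Since you have already shown $Kh(K)$ is $A$-free, the higher Tor terms vanish and the spectral sequence collapses, so nothing is wrong; alternatively (and a touch cleaner) one can note that a bounded complex of free $A$-modules whose homology is $A$-free is $A$-chain-homotopy-equivalent to its homology with zero differential, and tensor that equivalence with $\F$. Either way the step closes, so this is a presentational remark, not a gap.
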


With these preliminaries in mind, we prove Theorem \ref{RibbonInj} from the introduction.

\begin{proof}[Proof of Theorem \ref{RibbonInj}]

By Theorem \ref{Ribboninjective} we know that the map $F_C: \Kh(K_0) \to \Kh(K_1)$ is a split injection with left inverse $F_{\overline{C}}$. By Theorem \ref{Ex-act}, for $a \in \{0,1\}$, $$\widetilde{\Kh}(K_a) \cong \Ker(X: \Kh(K_a) \to \Kh(K_a))\cong \Image(X: \Kh(K_a) \to \Kh(K_a)).$$ Therefore, it is enough to show that the map $F_C$ is a $\F_2[X]/X^2$ module map. Indeed, then $F_C|_{\Ker}$ maps  $\Ker(X: \Kh(K_0) \to \Kh(K_0))$ to $\Ker(X: \Kh(K_1) \to \Kh(K_1))$ and $F_{\overline{C}}|_{\Ker}$ maps $\Ker(X: \Kh(K_1) \to \Kh(K_1))$ to $\Ker(X: \Kh(K_0) \to \Kh(K_0)).$ Further, $F_{\overline{C}}|_{\Ker}\circ F_C|_{\Ker}=id|_{\Ker}$. Therefore $F_C|_{\Ker}$ is a split injection. \\

Now, any cobordism can be decomposed into births (0-handles) and saddle moves (1-handle attachments) and deaths (2-handles). So, to show that the maps induced on Khovanov homology by cobordisms respect the $X$ action, it suffices to verify the following. 

\begin{enumerate}
\item Births and deaths respect the module structure with respect to a base-point not on the circle dying or being born.  
\item The isomorphisms of Khovanov homology associated to Reidemeister moves respect the module structure. 
\item The maps associated with saddles respect the module structure.

\end{enumerate}

 Item $1$ is clear from the definition of the $X$ action, provided we chose a base-point on the original knot diagram, away from where the births and deaths occur.\\

Item $2$ follows from Proposition $2.2$ of Hedden-Ni \cite{MR3190305}. Evidently, the homotopy equivalences induced from Reidemeister moves commute with the $X$ action if the Reidemeister moves does not involve a strand moving across a base-point. Therefore it suffices to show that moving a strand across the base-point does not change the action of $X$ on homology. This follows by writing down an explicit chain homotopy between the different base-point actions associated with choosing two marked points, on the same component, on opposite sides of a crossing. These homotopy equivalences appear in \cite[Lemma 2.3]{MR3190305}.  \\

Item $3$ reduces to a local calculation in a complete resolution. Either the saddle cobordism merges two components, or splits one component into two. In either case, it is easy to check that the maps involved commute with the $X$ action. \qedhere

\end{proof}

\section{Knots and Prime Tangles}\label{KnotsP}

The main theorem of this section is the following:

\begin{theorem}\cites{MR621991, MR542689}\label{Prime}
Any knot is ribbon concordant to a prime knot. 
\end{theorem}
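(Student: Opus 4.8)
The plan is to produce the prime knot as a sum of two prime tangles and to produce the concordance by gluing a concordance of tangles to a product cobordism, so that the primeness of the output is handed to us by the theorem of Lickorish that the sum of two prime tangles is a prime knot. Although the statement only asks for a concordance, I would in fact aim for a \emph{ribbon} concordance, both because the construction below is visibly ribbon and because that is what Theorem~\ref{Ribboninjective}, and its reduced analogue, require for the applications in the rest of the paper.

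The first ingredient I would record is the $2$-string tangle analogue of the elementary fact that the unknot is ribbon concordant to infinitely many knots: there is a prime $2$-string tangle $P$ in $B^3$ together with a ribbon cobordism of tangles, rel boundary, from the trivial $2$-string tangle $t$ to $P$ --- a properly embedded genus-zero surface in $B^3\times[0,1]$ restricting to $t$ and $P$ at the two ends and to $\partial t\times[0,1]$ on $\partial B^3\times[0,1]$, with only births and saddles as critical points of the height function. Here $P$ can be taken from Lickorish's explicit prime tangle \cite{MR621991}, and checking that it is prime is exactly the verification of Lickorish's two conditions --- that every $2$-sphere meeting $P$ transversally in two points bounds a trivial ball-arc pair, and that $P$ has no separating disk --- carried out in \cites{MR621991,MR542689}.

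Given an arbitrary knot $K$, I would then choose balls meeting $K$ in trivial $2$-string sub-tangles and, inside each, install a copy of $P$ using the ribbon cobordism above; gluing these tangle cobordisms to the product cobordism on the complementary region produces a knot $K'$ and a ribbon concordance $C\colon K\to K'$. If the insertions are arranged so that $K'$ is exhibited as a sum of prime tangles, then the theorem of \cites{MR621991,MR542689} that the sum (and partial sum) of prime tangles is a prime knot (resp. a prime tangle) shows $K'$ is prime, and we are done.

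The step I expect to be the real obstacle is the placement of these insertions. A single local tie-in is never enough: any prime connected summand of $K$, or any essential companion torus, that lies in the part of $S^3$ untouched by the modification survives intact into $K'$, so $K'$ would still be composite. One must instead spread the modification over all of $K$ --- exhibiting $K'$ genuinely as a sum of prime tangles rather than as the original knot with a prime tangle grafted on --- and then run the incompressible-surface and innermost-disk arguments that underlie the primeness of sums and partial sums of prime tangles. Controlling this, together with checking that the tangle cobordism can be taken to have no deaths so that $C$ is genuinely a ribbon concordance, is the technical heart, and is precisely what \cites{MR621991,MR542689} accomplish.
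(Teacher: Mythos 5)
Your overall strategy matches the paper's: modify the knot inside a ball by a ribbon cobordism of tangles that replaces a trivial $2$-string tangle with a prime one, then invoke Lickorish's theorem that a sum of prime tangles is a prime knot. You are also right to aim for a \emph{ribbon} concordance, since that is what the rest of the paper uses. However, there is a real gap, and it is exactly at the point you flag as ``the technical heart.''

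The ingredient you are missing is the Kirby--Lickorish lemma (Lemma~1 in this paper, due to \cites{MR542689} and discussed in Bleiler): \emph{for any nontrivial knot $K$ there is a $2$-sphere meeting $K$ transversely in four points, cutting $S^3$ into balls $A$ and $B$ such that $(A,A\cap K)$ is a trivial tangle and $(B,B\cap K)$ is a prime tangle.} This lemma is precisely what makes a single local tie-in suffice, so your worry that ``a single local tie-in is never enough'' is unfounded. The complementary tangle $(B,B\cap K)$ is prime \emph{as a tangle}---in particular any prime connected summand or companion torus of $K$ is already absorbed into it in a way that survives the tangle-primeness conditions---so once you replace the trivial tangle $(A,A\cap K)$ by a prime tangle (the paper uses the clasp tangle $Cl$, with an explicit ribbon concordance from the trivial tangle), the result is a sum of two prime tangles and hence a prime knot by Theorem~5. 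There is no need to spread insertions over all of $K$ and then re-run innermost-disk arguments. The paper does additionally decompose $K$ into its prime connected summands and take a partial sum of the resulting prime tangles, but this is for the later application to $K_n=K\#\cdots\#K$; the logical engine is still Lemma~1. Without Lemma~1, your plan reduces to ``arrange the insertions so that $K'$ is a sum of prime tangles,'' which is not a proof but a restatement of the goal, and the burden you defer to \cites{MR621991, MR542689} is carried there by exactly this lemma. You should state Lemma~1 explicitly, cite it, and then the single-insertion argument goes through cleanly.
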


The proof of this theorem is standard and is well explained elsewhere in the literature. We include a review of the techniques used in the proof for the convenience of the reader and to introduce some notation. We begin with a definition and a convention \cites{MR621991, MR542689, Bleiler}.

\begin{definition}

A (4-ended) \emph{tangle} with no closed components is an embedding of $[0,1] \sqcup [0,1]$ into $B^3$ so that $\{0,1\} \cup \{0,1\}$ map to $S^2=\partial B^3$. We specify a tangle by a diagram, see Figure \ref{Clasp}. We denote such a tangle by $(B,T)$ or just $T$. A tangle $(B,T)$ is \emph{prime} if both of the following conditions hold:
\begin{enumerate}
\item Any $2$-sphere embedded in $B$ that intersects the knot transversely at two points bounds on one side a three ball $A$ so that $A \cap T$ is homeomorphic to the standard ball arc pair $(D^2 \times [0,1], 0 \times [0,1])$.  

\item $(B,T)$ is not a rational tangle. Equivalently, $(B,T)$ does not contain any separating disks. 
\end{enumerate}

\end{definition}

One motivation for the name prime tangle and illustration of their use is indicated by the following:

\begin{theorem}\cite[Lemmas 1, 2]{MR621991} \label{Sum}
The sum of two prime tangles is a prime knot. The partial sum of two prime tangles is a prime tangle. 
\end{theorem}

For the proof, see \cite{MR621991}. In this paper, we use the notation $+_p$ for the partial sum of two tangles and the notation $T_1+T_2$ for the sum of two tangles. These operations depend on a choice of which endpoints are identified. In the present work, the operations $+_p$ and $+$ mean the operations in Figures \ref{Partial} and  \ref{fig:prob1_6_12} respectively. For our purposes, we make this explicit as follows. Let NW, NE, SW, SE denote the northwest, northeast, etc corners of the diagram of a tangle $T$. Then $T_1 +_p T_2$ means the tangle formed by joining the NE and SE corners of $T_1$ to the NW and SW corners of $T_2$ respectively by unknotted arcs. Further, $T_1 + T_2$ means the tangle formed from $T_1$ and $T_2$ by joining the $NE$ corner of $T_1$ with the SE corner of $T_2$, the SE corner of $T_1$ with the NE corner of $T_2$, etc. See Figure \ref{fig:prob1_6_12}, which shows $(T_1+_p T_2) + Cl$. Note that, even though we use the $+$ sign to denote the tangle sum operation, it is usually not commutative.

\begin{figure}
    \centering
    \begin{minipage}{.5\textwidth}
      \centering
        \includegraphics[scale=.09]{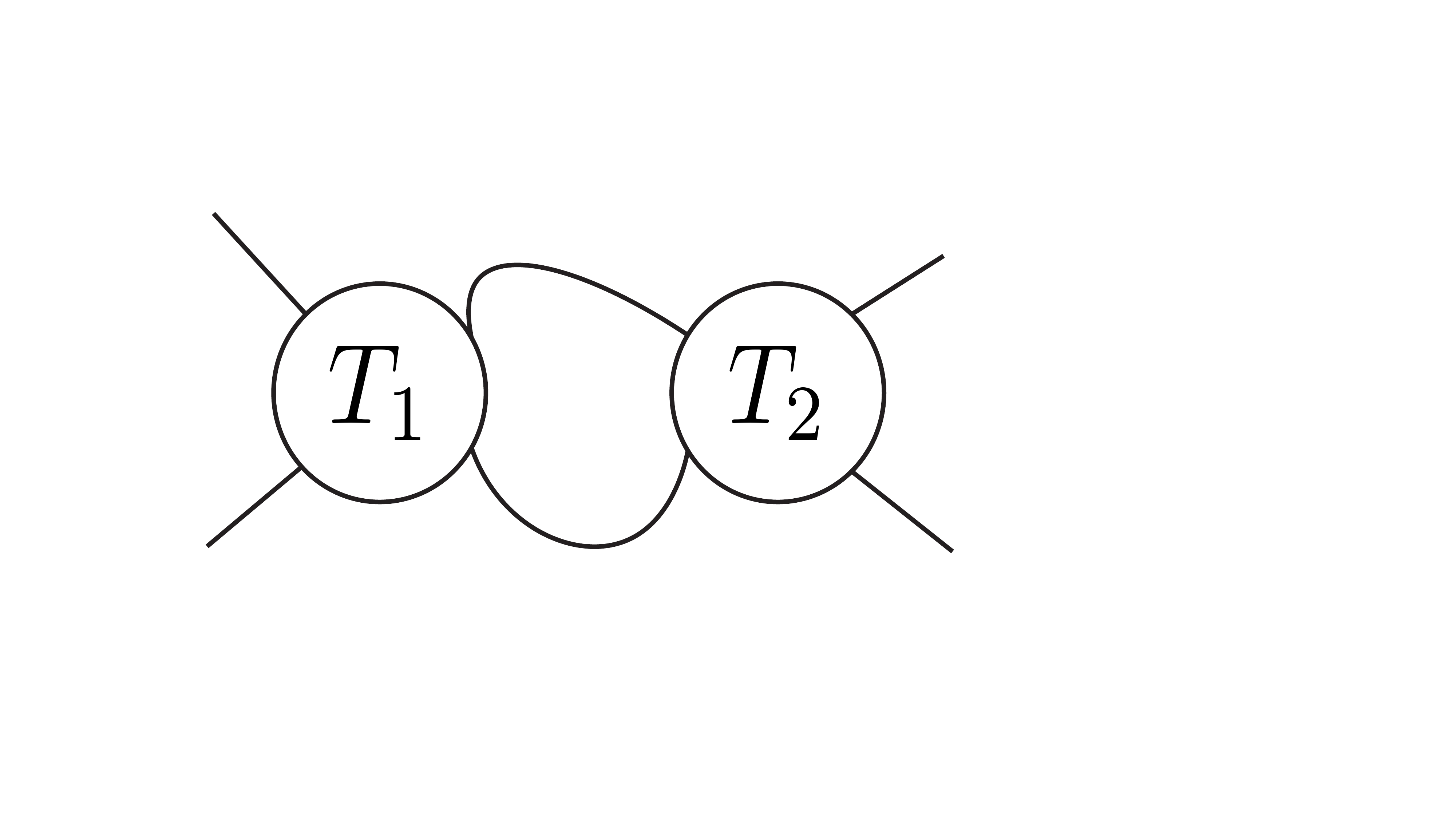}
        \caption{$T_1 +_p T_2$}
        \label{Partial}
        \label{fig:prob1_6_2}
    \end{minipage}%
    \begin{minipage}{0.5\textwidth}
        \centering
        \includegraphics[scale=.09]{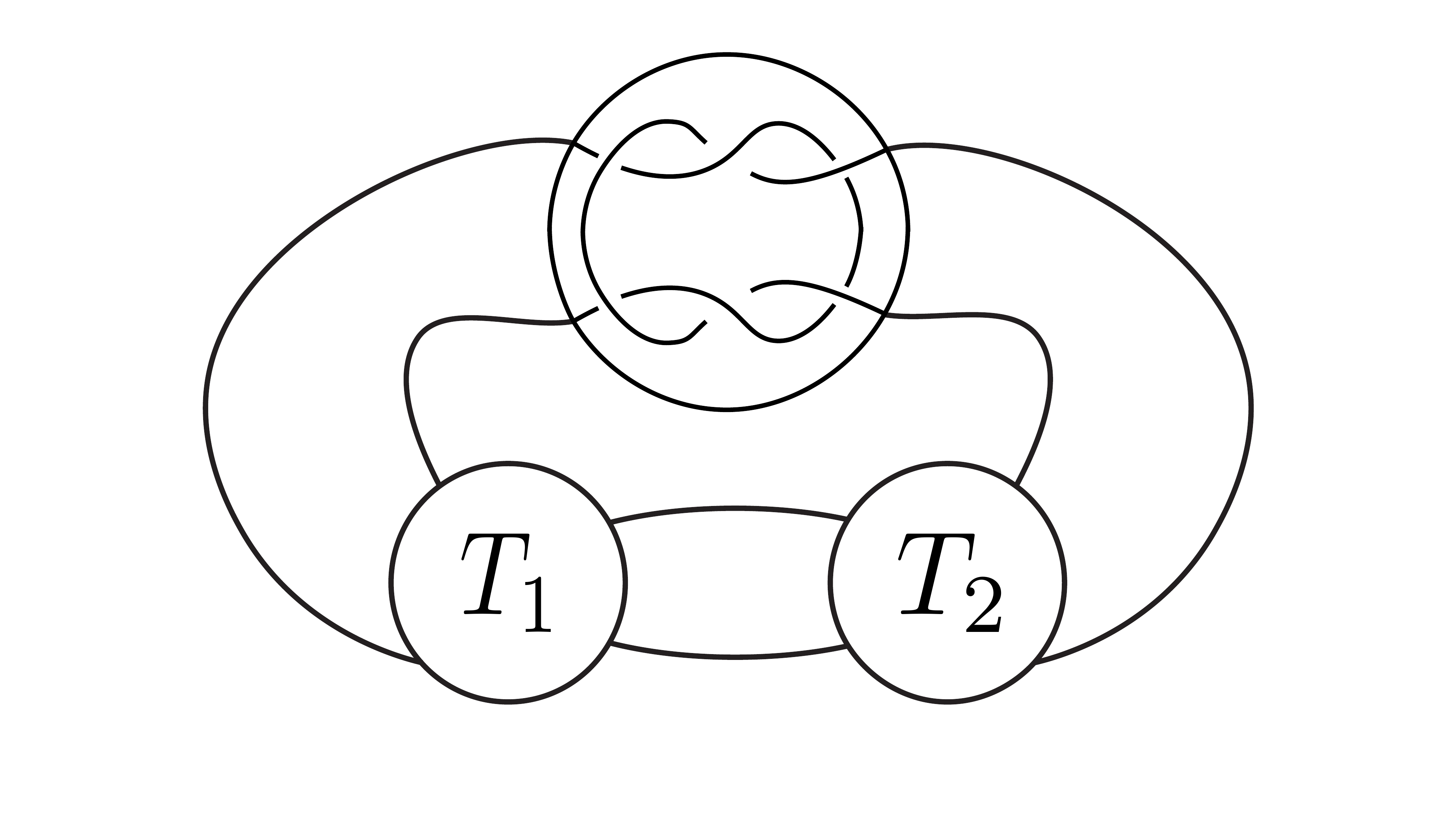}
        \caption{$(T_1+_pT_2)+Cl$}
        \label{fig:prob1_6_12}
    \end{minipage}
\end{figure}

\begin{lemma}[See \cite{MR542689},  \cite{Bleiler}] 
\label{StandardPrime}
For any nontrivial knot $K$ in $S^3$ there is an embedded $S^2$ meeting $K$ transversely in four points separating $S^3$ into two three balls $A$ and $B$ so that 

\begin{enumerate}
\item $(A, A \cap K)$ is a trivial two-stranded tangle (so homeomorphic, as pairs, to $(D^2 \times I, \{(-1/2,0)\} \times I \cup \{(1/2,0)\} \times I))$, and

\item $(B,B \cap K)$ is a prime tangle.

\end{enumerate}
\end{lemma}

\begin{lemma}\label{Claspy}
The clasp tangle $Cl$ is a prime tangle. 

\begin{proof}
Since each of the individual strings that compose the clasp tangle are unknotted, condition $1$ in the definition of a prime tangle is automatically satisfied. We just need to verify that the clasp is not a rational tangle. Suppose for the sake of contradiction that it is. Recall that a knot built out of two rational tangles is a two-bridge knot. It is a classical fact (originally proved by Schubert, see J. Schultens \cite{MR2018265} for a modern proof) that the bridge number of a knot, $b(K)$, satisfies $b(K \# K')=b(K)+b(K')-1$. Further, the only knot with bridge number $1$ is the unknot. These two facts together imply that two-bridge knots are prime. However, the numerator closure of the clasp tangle is clearly a connected sum $3_1 \# m(3_1)$. \qedhere

\end{proof}

\begin{figure}[!htb]
    \centering
    \begin{minipage}{.5\textwidth}
        \centering
      \includegraphics[scale=.09]{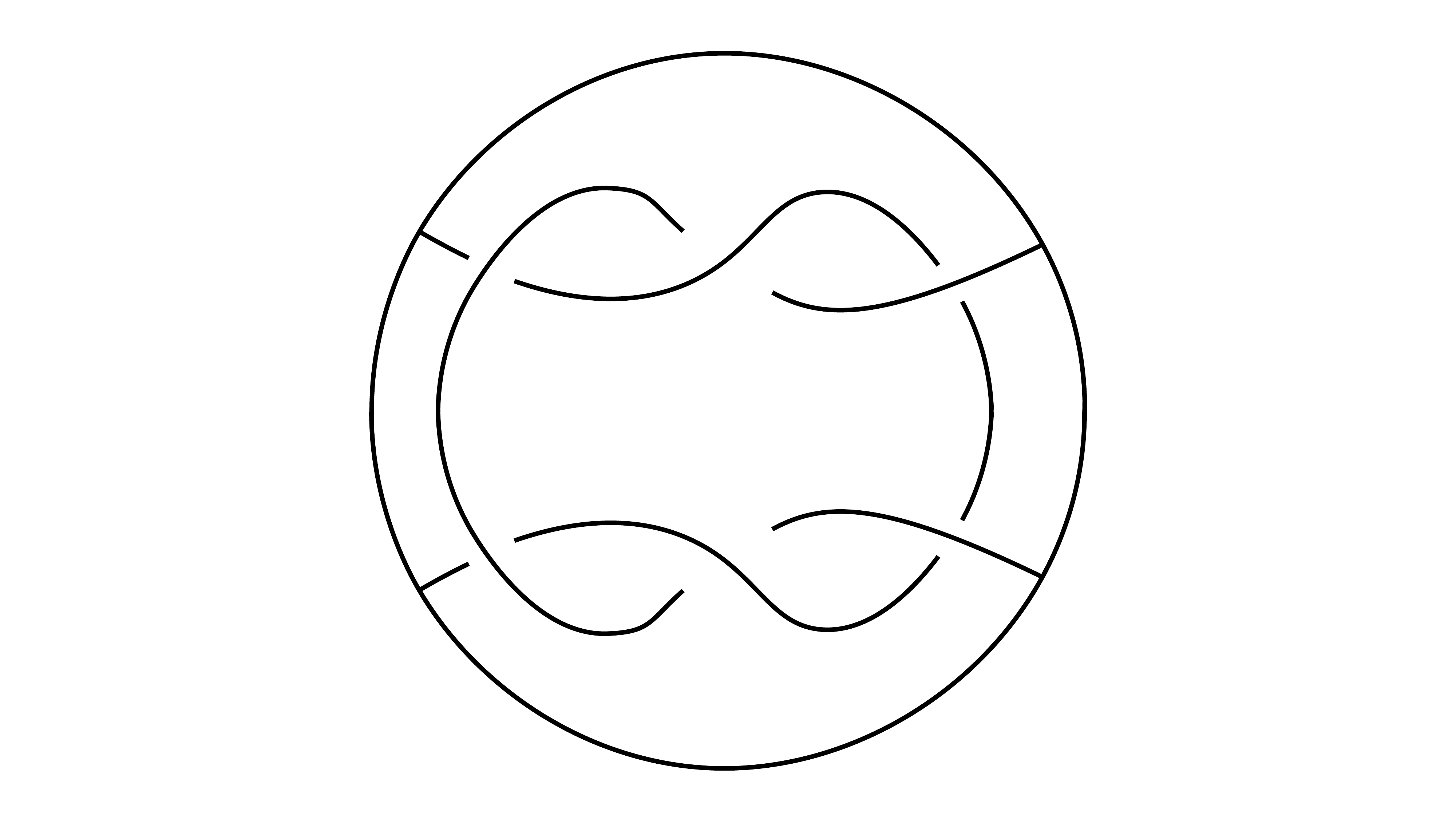}
        \caption{The clasp tangle $Cl$}
        \label{Clasp}
    \end{minipage}%
    \begin{minipage}{0.5\textwidth}
        \centering
        \includegraphics[scale=.09]{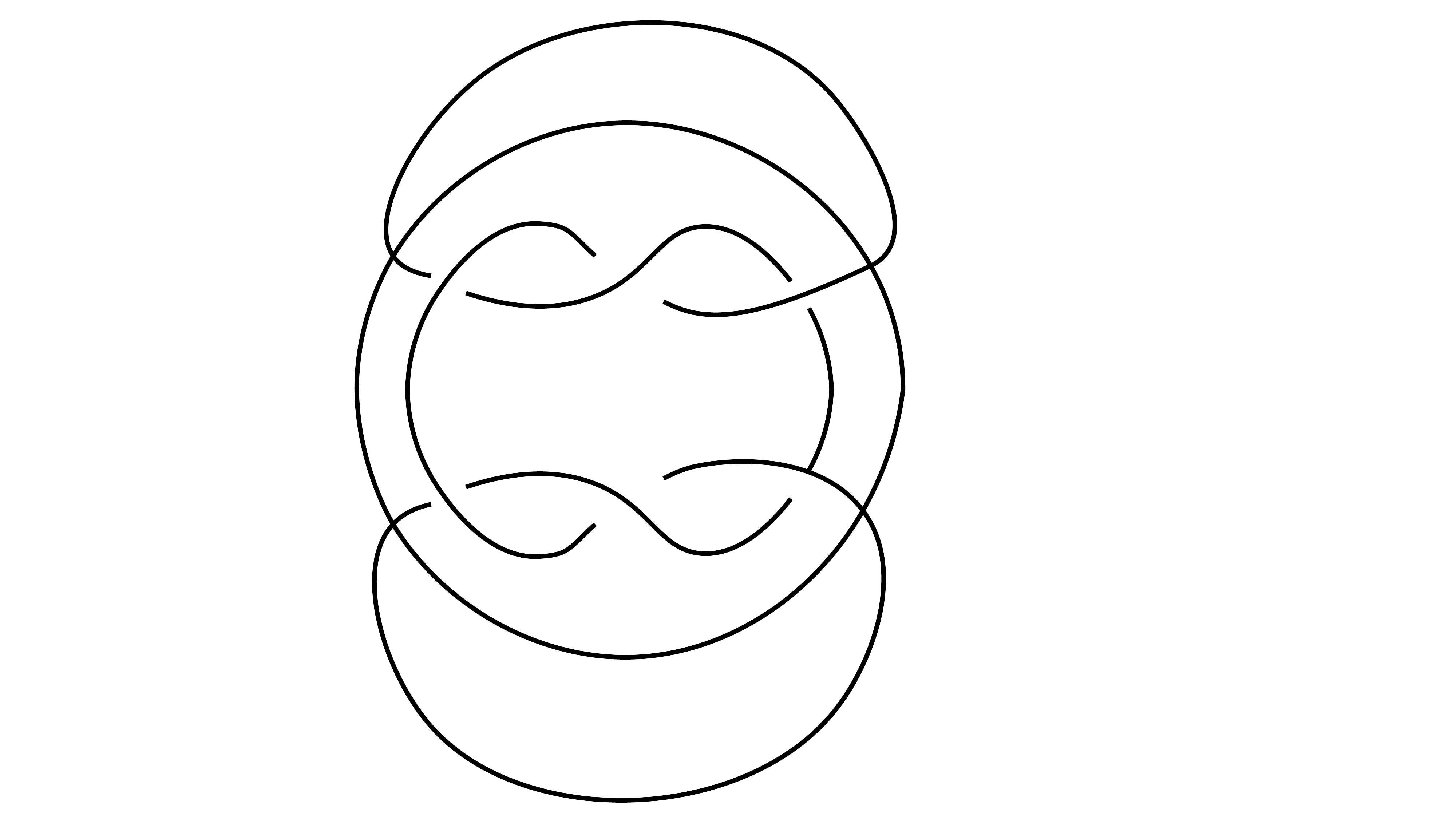}
        \caption{The numerator closure of the clasp tangle}
        \label{fig:prob1_6_1}
    \end{minipage}
\end{figure}

\end{lemma}

\begin{proof}[Proof of Theorem \ref{Prime}] Since any knot can be decomposed as a connected sum of prime knots, and connected sum is compatible with concordance, it suffices to prove the result for a knot $K=K_1 \# K_2$ where $K_i$ are prime. By Lemma \ref{StandardPrime}, we can find two disjoint three balls $B_1$ and $B_2$ so that $T_i=B_i \cap K_i$ is a prime tangle and $(S^3 \setminus B_i) \cap K_i$ is an untangle. Now, consider $T_1 +_p T_2$. This tangle is prime by Theorem \ref{Sum}. The denominator closure of the resulting tangle is $K_1 \# K_2$. The tangle sum $(T_1 +_p T_2)+Cl$ is then a prime knot by Theorem \ref{Sum}. The ribbon concordance, shown in Figure \ref{Denominator}---Figure \ref{Final}, between $K_1 \# K_2$ and $(T_1 +_p T_2)+Cl$  establishes the result.\qedhere

\end{proof}

\begin{figure}[!htb]
    \centering
    \begin{minipage}{.5\textwidth}
        \centering
      \includegraphics[scale=.09]{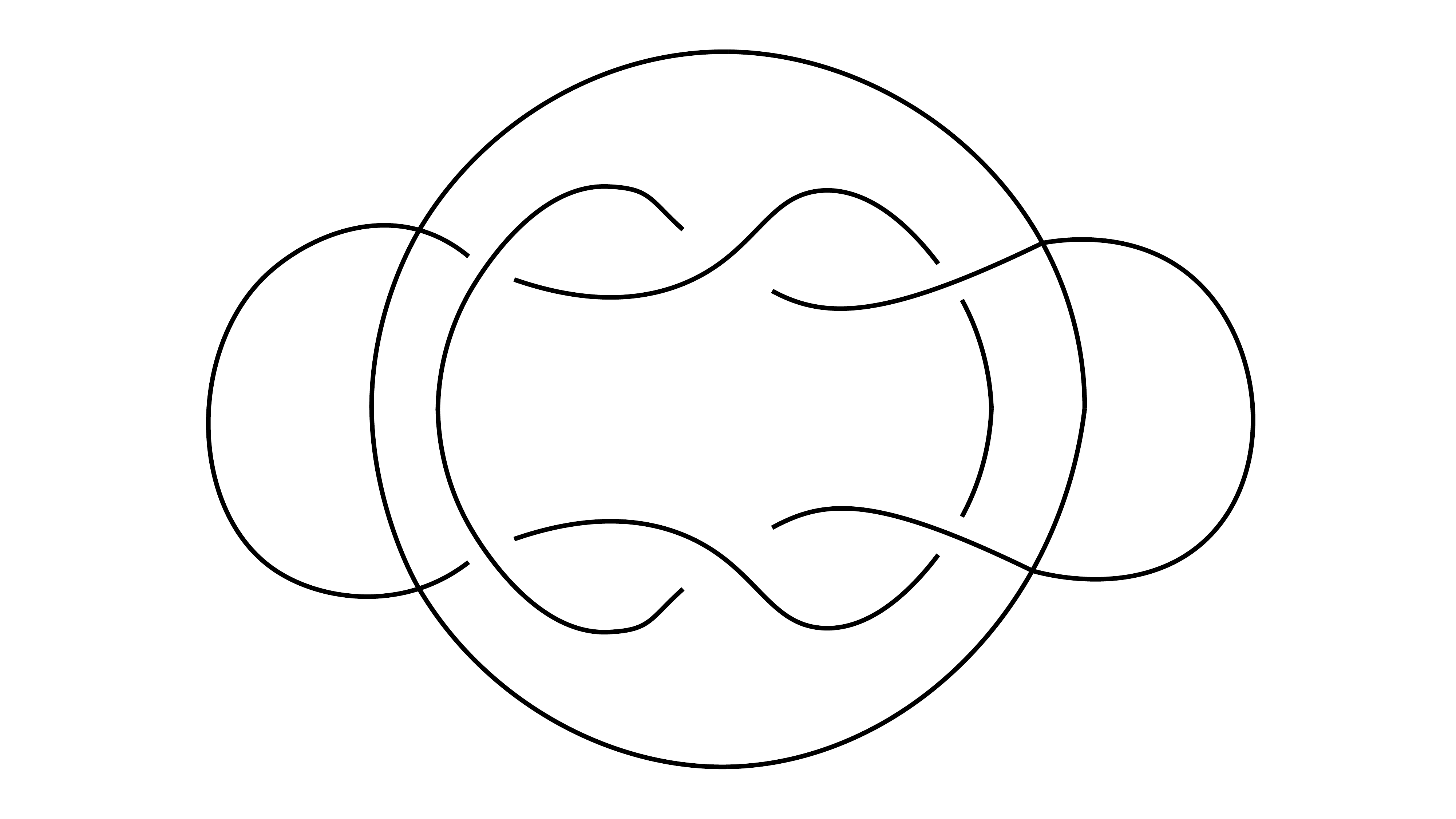}
        \caption{Denominator closure of the clasp tangle}
        \label{fig:prob1_6_2}
    \end{minipage}%
    \begin{minipage}{0.5\textwidth}
        \centering
        \includegraphics[scale=.09]{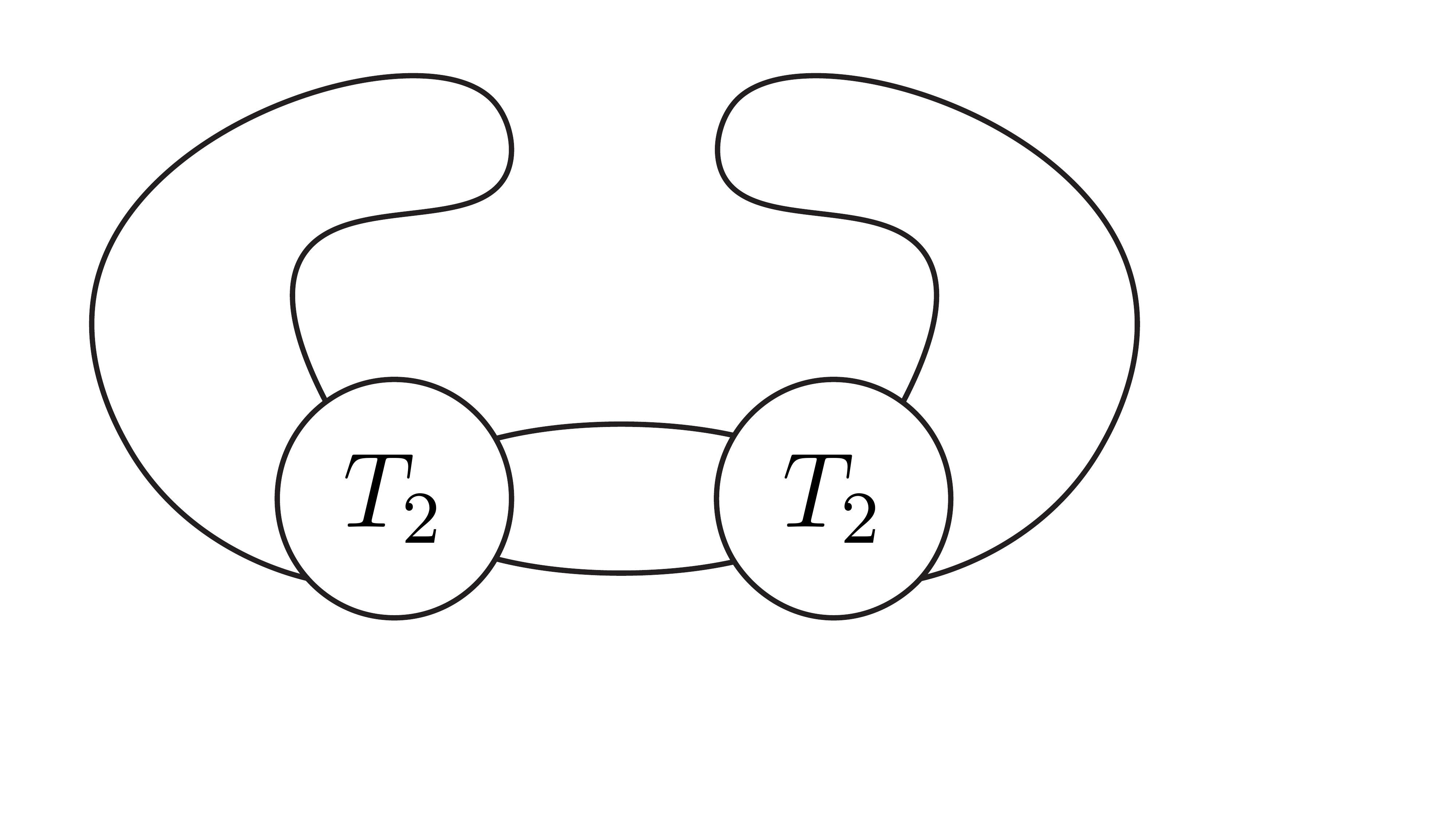}
        \caption{Denominator closure of $T_2 +_p T_2$}
        \label{fig:prob1_6_1}
    \end{minipage}
\end{figure}

\section{Steenrod Operations and Stable Homotopy Type}\label{Steenrod}

In this section we review, in bare bones fashion, the necessary facts about Khovanov stable homotopy type needed in establishing Theorem \ref{PrimeSq}.

We begin with a theorem, which explains how the Khovanov stable homotopy type behaves under the operation of connected sum. Throughout this section, let $L$ denote a link of one or more components. 

\begin{theorem} \cite[Theorem 2]{TylerLawsonRobertLipshitz}\label{connected}

$$\widetilde{\mathcal{X}}^j_{\Kh}(L_1 \# L_2) \simeq \bigvee_{j_1+j_2=j} \widetilde{\mathcal{X}}^{j_1}(L_1) \wedge \widetilde{\mathcal{X}}^{j_2}(L_2).$$

\end{theorem}

Next, we recall the precise naturality statement enjoyed by stable cohomology operations.

\begin{theorem} \cite[Theorem 4]{Sinvariant}  \label{Cobordism} Let $S$ be a smooth cobordism in $[0,1] \times S^3$ from $L_1$ to $L_2$, and  let $F_S: \Kh^{*,*}(L_1) \to \Kh^{*,*+\chi(S)}(L_2)$ be the map associated to $S$. Let $\alpha : \widetilde{H}^*(\cdot;\F) \to \widetilde{H}^{*+n}(\cdot;\F)$ be a stable cohomology operation. Then the following diagram commutes up to sign:

\[
\begin{tikzcd}
\Kh^{i,j}(L_1;\F) \arrow{r}{\alpha} \arrow[swap]{d}{F_S} & \Kh^{i+n,j}(L_1;\F) \arrow{d}{F_S} \\
\Kh^{i,j+\chi(S)}(L_2;\F)  \arrow{r}{\alpha} & \Kh^{i+n,j+\chi(S)}(L_2;\F).
\end{tikzcd}
\]

\end{theorem}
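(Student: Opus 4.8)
The plan is to lift the cobordism map $F_{S}$ to a map of Lipshitz--Sarkar spectra and then invoke the tautological naturality of a stable cohomology operation with respect to maps of spectra. Recall from \cite{MR3230817,MR3252965} that for each quantum grading $j$ and each diagram $D$ of a link $L$ there is a spectrum $\mathcal{X}^{j}(D)$, well defined up to stable homotopy equivalence, equipped with an identification $\widetilde{H}^{i}(\mathcal{X}^{j}(D);\F)\cong Kh^{i,j}(L;\F)$. By definition a stable cohomology operation $\alpha\colon\widetilde{H}^{*}(\,\cdot\,;\F)\to\widetilde{H}^{*+n}(\,\cdot\,;\F)$ is a natural transformation of functors on the stable homotopy category, so for \emph{every} map of spectra $f\colon A\to B$ the square
\[
\begin{tikzcd}
\widetilde{H}^{i}(B) \arrow{r}{\alpha} \arrow[swap]{d}{f^{*}} & \widetilde{H}^{i+n}(B) \arrow{d}{f^{*}} \\
\widetilde{H}^{i}(A) \arrow{r}{\alpha} & \widetilde{H}^{i+n}(A)
\end{tikzcd}
\]
commutes on the nose. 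Since $\widetilde{H}^{*}$ is contravariant, it therefore suffices to produce, for chosen diagrams $D_{1}$ of $L_{1}$ and $D_{2}$ of $L_{2}$, a map of spectra $\mathcal{X}(S)\colon\mathcal{X}^{j+\chi(S)}(D_{2})\to\mathcal{X}^{j}(D_{1})$ whose effect on $\widetilde{H}^{*}$, under the identifications above, is $\pm F_{S}$; setting $f=\mathcal{X}(S)$, $B=\mathcal{X}^{j}(D_{1})$, $A=\mathcal{X}^{j+\chi(S)}(D_{2})$ in the square then reproduces exactly the diagram in the statement.

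To build $\mathcal{X}(S)$ I would present $S$ by a movie, i.e.\ as a finite composition of elementary cobordisms --- births, deaths, saddles, and planar isotopies/Reidemeister moves --- and assemble $\mathcal{X}(S)$ as the composite of the spectrum-level maps that Lipshitz--Sarkar and Lawson--Lipshitz--Sarkar \cite{MR3230817,MR3252965,TylerLawsonRobertLipshitz} attach to each elementary piece: Reidemeister moves induce stable homotopy equivalences, while births, deaths and saddles induce maps in the appropriate direction, each defined up to sign. One then checks, one elementary piece at a time, that the induced map on $\widetilde{H}^{*}$ agrees up to sign with the corresponding Frobenius-algebra map (unit, counit, merge, split) appearing in Khovanov's definition of $F_{S}$ --- a local computation on the cellular cochain complexes of the spectra $\mathcal{X}^{j}$, which by construction compute the Khovanov complexes. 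Composing over the movie gives $\mathcal{X}(S)^{*}=\pm F_{S}$, as needed. Observe that $\mathcal{X}(S)$ need not be an invariant of $S$ for this purpose, so no movie-move invariance is required: the existence of a single realizing map suffices, and its sign ambiguity is precisely the ``up to sign'' of the conclusion. In particular, over $\F=\F_{2}$ the square commutes on the nose, which is all this paper uses.

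The substantive content --- and the main obstacle --- is the construction of the elementary cobordism maps of spectra and the verification that they refine Khovanov's cobordism maps, carried out in \cite{MR3252965,TylerLawsonRobertLipshitz}. This is the delicate step, precisely because of the sign-assignment and ladybug-matching choices built into the Lipshitz--Sarkar construction, which is also what forces the sign ambiguity in the conclusion. Everything else --- the naturality of $\alpha$, the comparison of chain-level maps, and the passage between diagrams of $L_{1},L_{2}$ and their associated spectra --- is formal. \qedhere
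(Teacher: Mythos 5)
The paper does not prove this statement; it is quoted from Lipshitz--Sarkar \cite[Theorem 4]{Sinvariant} and used as a black box. Your proposal correctly reconstructs the argument of that source --- lift $F_S$ to a spectrum-level map via a movie presentation, invoke naturality of stable cohomology operations in the stable homotopy category, note that contravariance of $\widetilde{H}^*$ reverses the direction of the spectrum map, and observe that the sign ambiguity in the elementary cobordism maps of spectra (and the irrelevance of movie-move invariance, since only existence of one realizing map is needed) accounts for the ``up to sign.''
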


Then, for $\alpha$ a stable cohomology operation, the following diagram commutes:

\[
\begin{tikzcd}
\Kh(U \sqcup K;\F_2) \arrow{r}{\alpha} \arrow[swap]{d}{\cong} & \Kh(U \sqcup K;\F_2) \arrow{d}{\cong} \\
  \F_2[X]/X^2 \otimes \Kh(K;\F_2)  \arrow{r}{Id \otimes \alpha} \arrow[swap]{d}{m} &   \F_2[X]/X^2 \otimes \Kh(K;\F_2) \arrow{d}{m} \\
\Kh(K;\F_2)  \arrow{r}{\alpha} & \Kh(K;\F_2).
\end{tikzcd}
\]
The bottom square commutes by Theorem $9$, since the $X$ action on $\Kh$ can also be viewed as induced from a merge cobordism $U\sqcup K \to K$ where the unknot is placed near the basepoint. The top square commutes since the Khovanov spectrum of the unknot is homotopy equivalent to a wedge of two $S^0$'s in grading $-1$ and $1$. This homotopy equivalence induces the map in cohomology that identifies $  \F_2[X]/X^2 \otimes \Kh(K;\F_2)$ with $\Kh(K;\F_2) \oplus \Kh(K;\F_2)$ with appropriate grading shifts.

Commutativity of the above diagram is the statement that any stable cohomology operation is a map of $\F_2[X]/X^2$ modules. It follows that the analogous diagram to the one in Theorem \ref{Cobordism}, with Khovanov homology replaced by reduced Khovanov homology commutes,  commutes. 

\begin{lemma}\cite[Corollary 1.4]{TylerLawsonRobertLipshitz}\label{SQ} For any $n$ there is a knot $K_n$ so that the operations

$$\Sq^n: \widetilde{\Kh}^{i,j}(K_n) \to \widetilde{\Kh}^{i+n,j}(K_n)$$ and

$$\Sq^n: \Kh^{i,j}(K_n) \to \Kh^{i+n,j}(K_n)$$
are nontrivial, for some $(i,j)$. 
\end{lemma}
 
 For the proof, see \cite[Proof of Corollary 1.4, Page 67]{TylerLawsonRobertLipshitz}. They find, for the knot $K=15^n_{41127}$, a class $\alpha \in \widetilde{\Kh}^{-1,0}(K; \F_2)$ so that $\Sq^1(\alpha) \neq 0 \in \widetilde{\Kh}^{0,0}(K;\F_2)$ and $\Sq^i(\alpha) =0$ for $i >1$.  Then, letting $K_n=K \# K \# \cdots \# K$, the Cartan formula and Theorem \ref{connected} give the result. \\






Since the knot $K_n$ in the above theorem is the knot $K$ connect summed with itself $n$ times, we can view $K_n$ as the denominator closure of the partial tangle sum $K +_p \cdots +_p K$ (see Figure \ref{Denominator}). 

\begin{figure}[!htb]
    \centering
    \begin{minipage}{.5\textwidth}
        \centering
        \includegraphics[scale=.09]{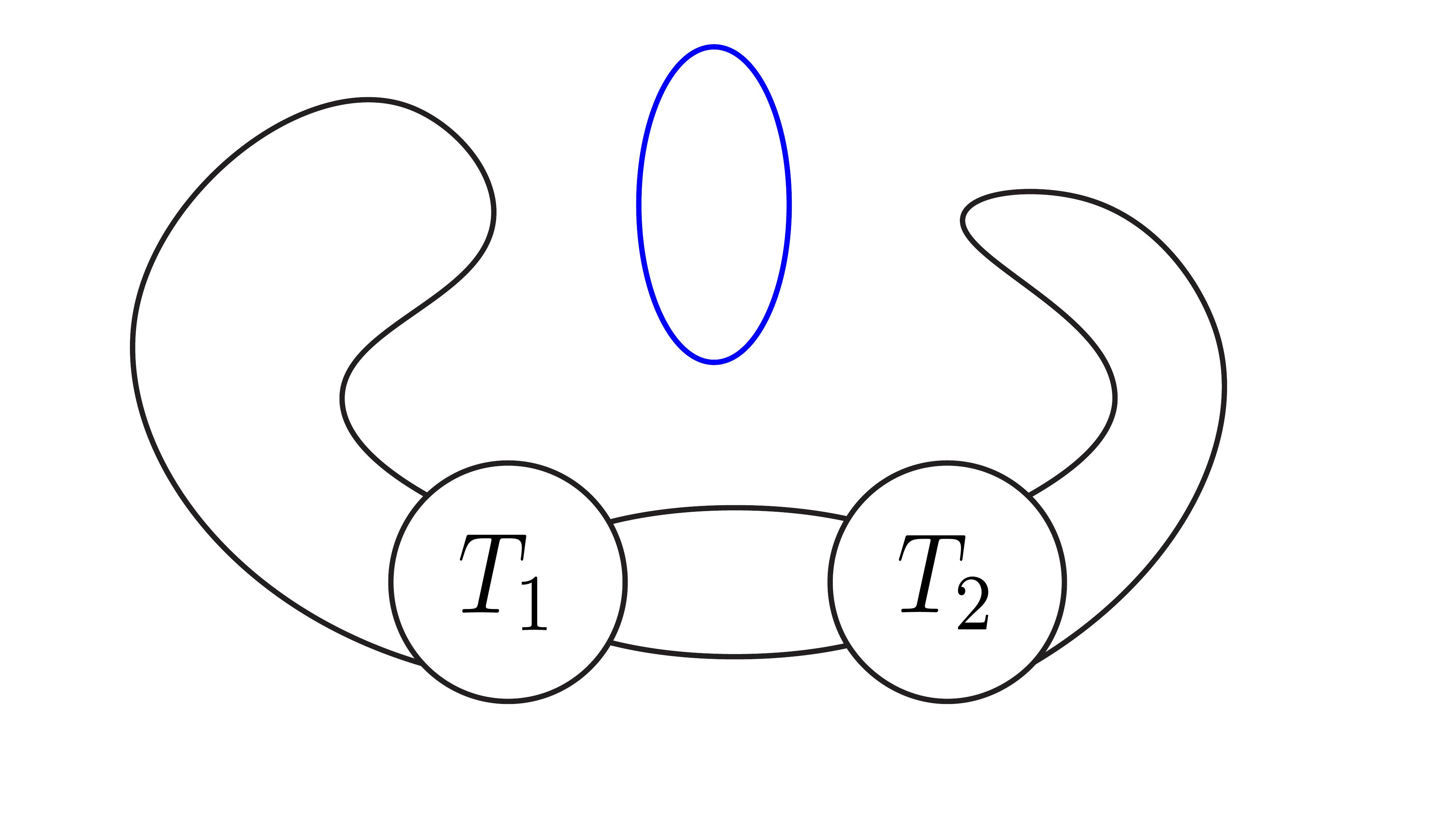}
        \caption{$K_1 \# K_2 \bigsqcup \text{Unknot}$}
        \label{Denominator}
        \label{fig:prob1_6_2}
    \end{minipage}%
    \begin{minipage}{0.5\textwidth}
        \centering
        \includegraphics[scale=.09]{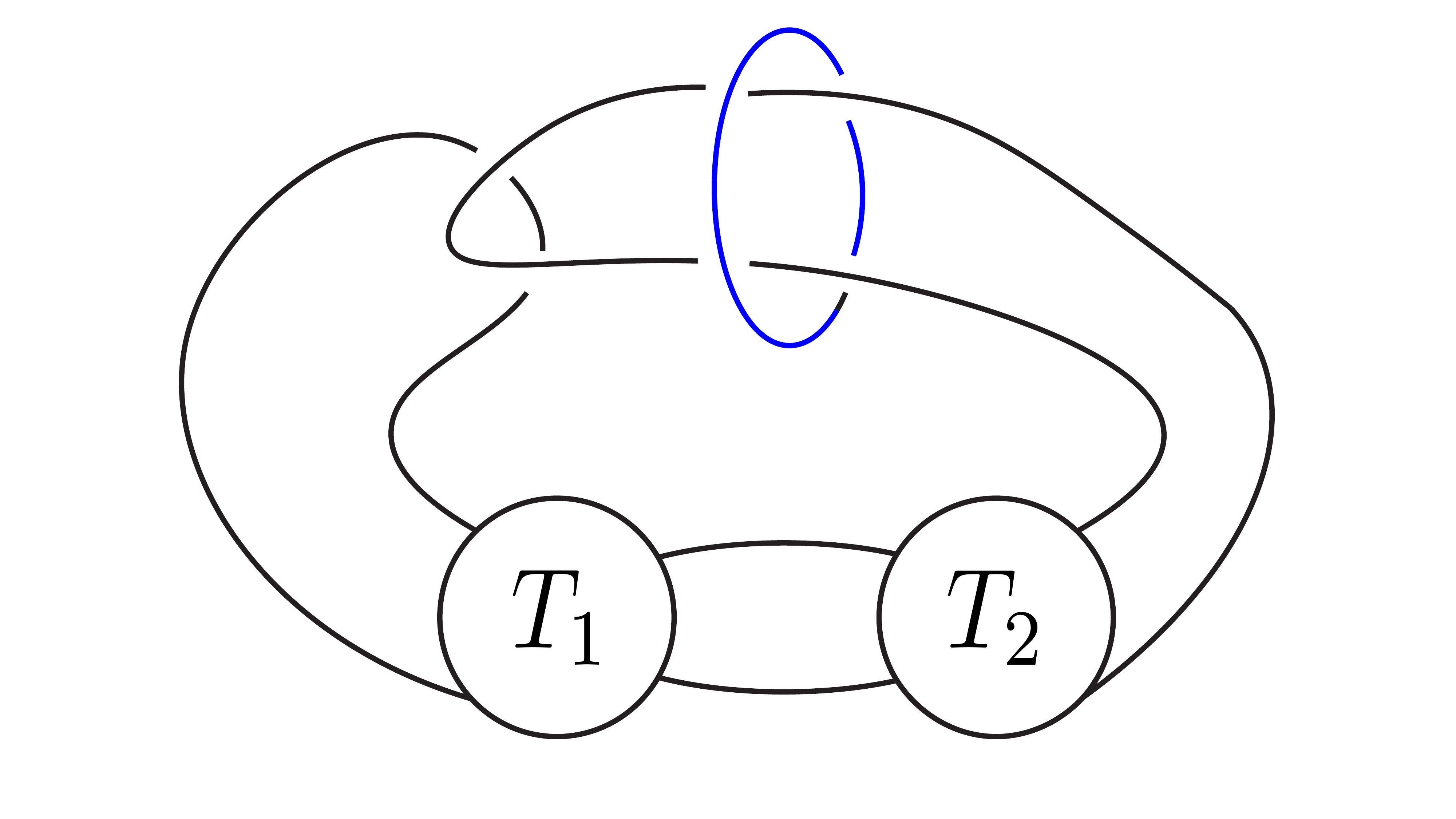}
        \caption{$K_1 \# K_2 \sqcup \text{Unknot}$ after isotopy}
        \label{fig:prob1_6_1}
    \end{minipage}
     \begin{minipage}{.5\textwidth}
        \centering
        \includegraphics[scale=.095]{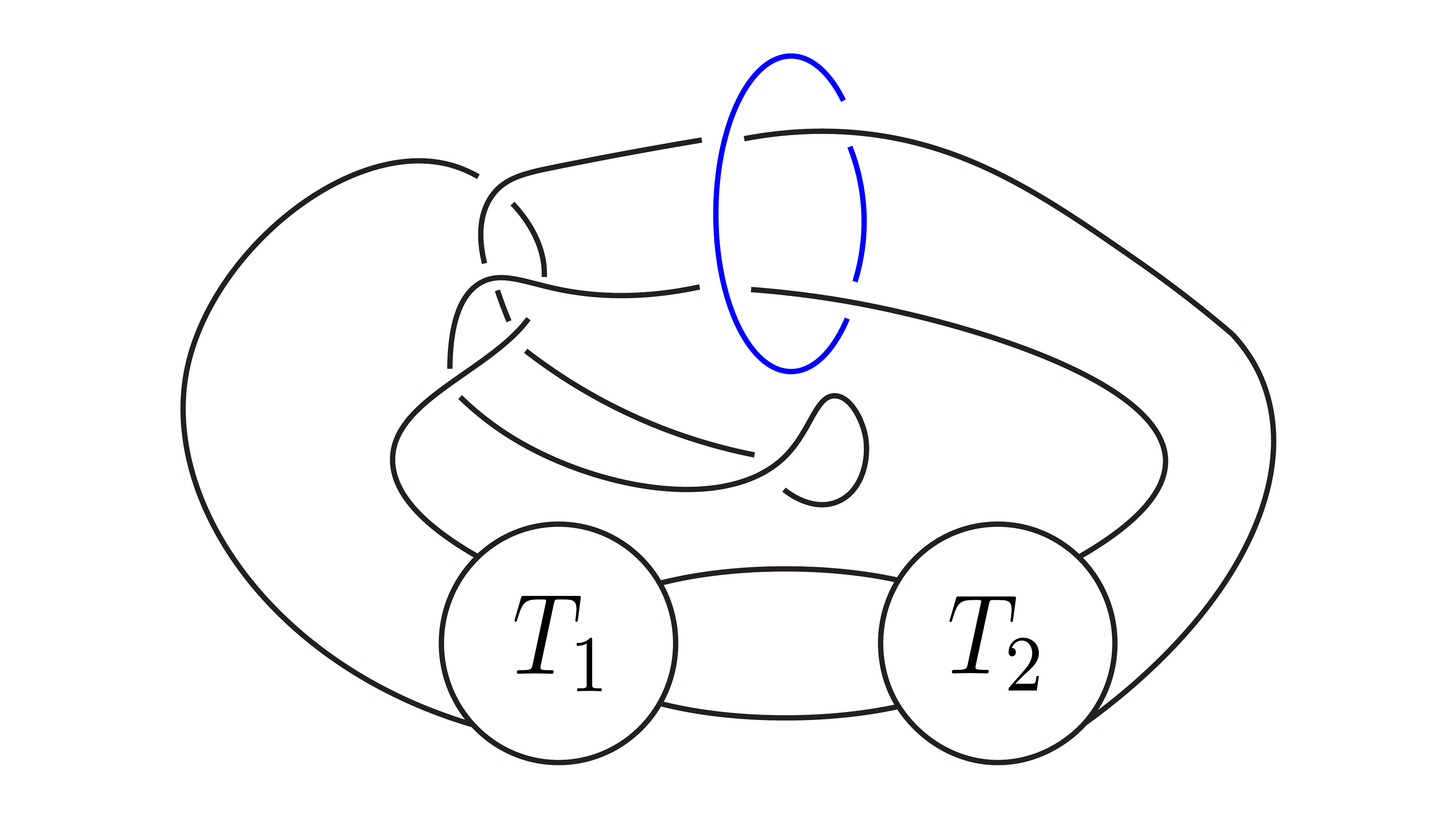}
        \caption{Another isotopy. }
        \label{fig:prob1_6_2}
    \end{minipage}%
     \begin{minipage}{.5\textwidth}
        \centering
      \includegraphics[scale=.095]{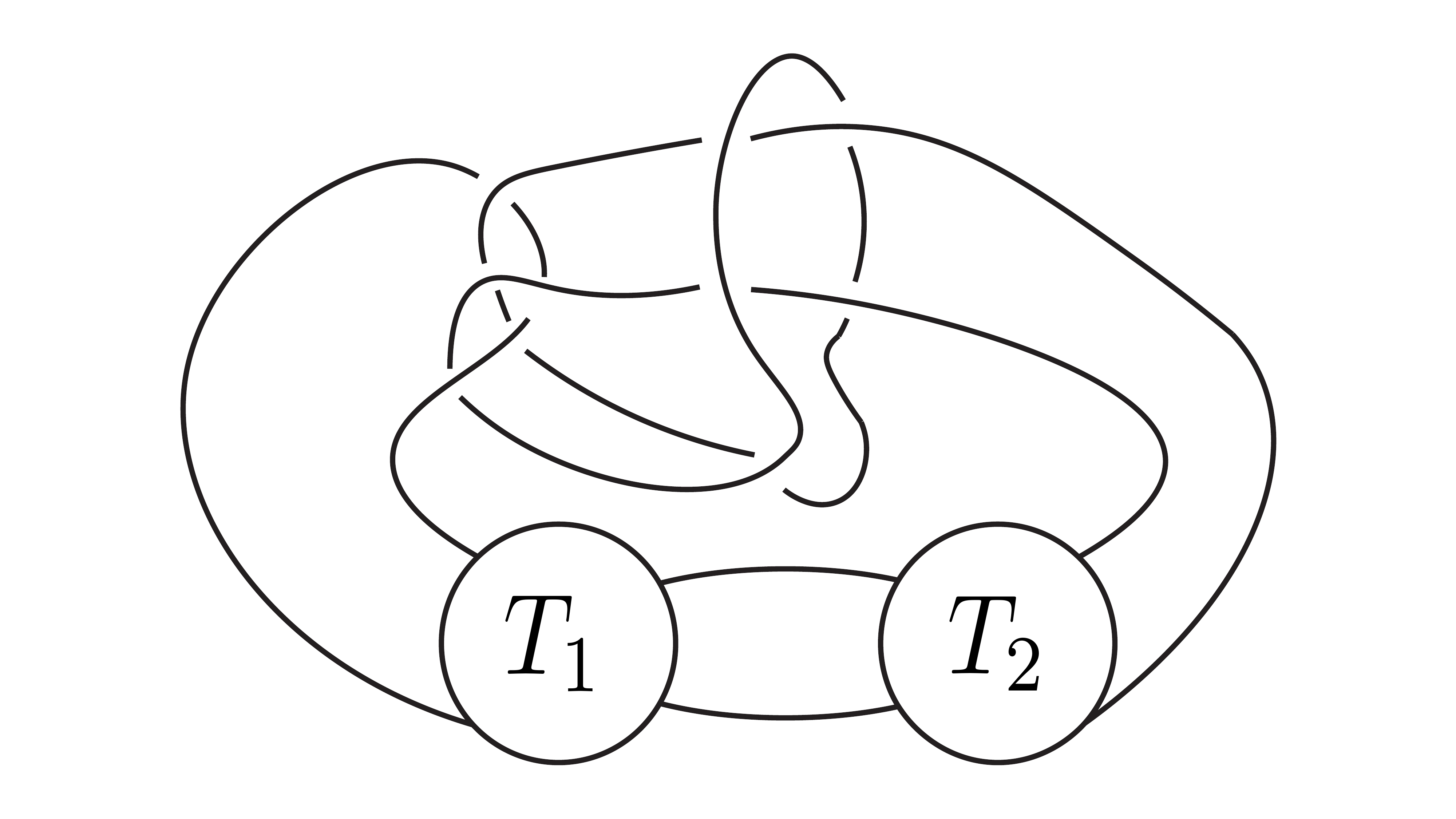}
        \caption{The result of adding a band; the final stage of the ribbon concordance between $K_1 \# K_2$  and the prime knot $(T_1 +_p T_2)+Cl$ .}
        \label{Final}
        \label{fig:prob1_6_2}
    \end{minipage}%

\end{figure}

\section{Proof of Theorem 1}\label{Proof}

In this section, we collect the results from the previous sections together to construct a proof of Theorem \ref{PrimeSq}. \\

\begin{proof}[Proof of Theorem 1]
By Lemma \ref{StandardPrime}, the knot $K$ from the proof of Lemma \ref{SQ} can be decomposed as a prime tangle $T_2$ and an untangle $T_1$, so that the denominator closure of $T_2$ is $K$ (this is ``K with ears"; see \cite{Bleiler}). Then, the knot $K_n=K \# \cdots \# K$ is the denominator closure of the (prime) tangle $T_2 +_p T_2 +_p \cdots +_p T_2$, where recall $+_p$ denotes the partial sum of tangles. Consider the ribbon concordance $C$ given in Theorem \ref{Prime}, from $K_n$ to $P_n:=(T_2 +_p T_2 +_p \cdots +_p T_2)+Cl$. This is illustrated, for $n=2$, in Figure \ref{Denominator}---Figure \ref{Final} by replacing $T_1$ by $T_2$ in the figures. By Theorem \ref{Sum} and Lemma \ref{Claspy}, $P_n$ is a prime knot.

By Theorem \ref{RibbonInj}, the map
$$F_C: \widetilde{\Kh}(K_n) \to \widetilde{\Kh}(P_n)$$ is injective with left inverse given by $F_{\overline{C}}$ where $\overline{C}$ is the concordance $C$ upside-down. Therefore $\widetilde{\Kh}(P_n)=\widetilde{\Kh}(K_n) \oplus G$ for some complement $G$. Theorem \ref{Cobordism} implies that the following commutes (note that the Euler characteristic of any concordance is $0$):x
\[
\begin{tikzcd}
\widetilde{\Kh}^{-n,0}(K_n) \arrow{r}{\Sq^n} \arrow[swap]{d}{F_C} & \widetilde{\Kh}^{0,0}(K_n) \arrow{d}{F_C} \\
\widetilde{\Kh}^{-n,0}(P_n)  \arrow{r}{\Sq^n} & \widetilde{\Kh}^{0,0}(P_n).
\end{tikzcd}
\]

This immediately implies Theorem \ref{PrimeSq}, since the vertical maps are injective. \qedhere\\

\end{proof}

Next, we prove Corollary \ref{PrimeSqUn} from the introduction.

\begin{proof}[Proof of Corollary 1]
This proof follows closely the proof of \cite[Corollary 1.4]{TylerLawsonRobertLipshitz}. There is a long exact sequence in Khovanov homology induced from the cofiber sequence:

$$\widetilde{\mathcal{X}}^{j-1}(P_n) \to \mathcal{X}^{j}(P_n) \to \widetilde{\mathcal{X}}^{j+1}(P_n). $$

The long exact sequence takes the form:

$$ \cdots \to \widetilde{\Kh}^{i,j+1}(P_n) \to \Kh^{i,j}(P_n) \xrightarrow{\pi} \widetilde{\Kh}^{i,j-1}(P_n) \to \widetilde{\Kh}^{i+1,j+1}(P_n) \to \cdots $$

Since over the field $\F:=\Z/2\Z$ the Khovanov homology of any knot $K$ is isomorphic to the direct sum $\widetilde{\Kh}^{i,j+1}(K;\F) \oplus \widetilde{\Kh}^{i,j-1}(K;\F)$ of the shifted reduced homology, the map $\pi$ above is surjective. So, there is a class $\gamma \in  \Kh^{-n,1}(P_n)$ so that $\pi(\gamma)=\beta$, where the class $\beta$ is as in the proof of Theorem \ref{PrimeSq}. Naturality of the Steenrod squares establishes the result. \qedhere

\end{proof}

\noindent\textit{Remark 1}: The above proof applies to any stable homotopy refinement of Khovanov homology that satisfies the analogue of Theorems \ref{connected} and \ref{Cobordism}. The idea of the proof also offers an obstruction to ribbon concordance between two knots. If $P$ and $Q$ are knots with a ribbon concordance between them, the Khovanov homology of $P$ is a summand of the Khovanov homology of $Q$ with the same stable cohomology operations as the Khovanov homology of $Q$. \\

As an illustration of the above remark, we have the following lemma. To state it, we recall that in \cite{2012arXiv1210.1882S}, Seed constructed pairs of links $L$ and $L'$ so that $\Kh(L;\Z) \cong \Kh(L';\Z)$ but the invariants $\mathcal{X}_{\Kh}(L)$ and $\mathcal{X}_{\Kh}(L')$ are not stably homotopy equivalent. Then, the following lemma is immediate. 

\begin{lemma}
For each of Seed's pairs of knots, there is no ribbon concordance between them.  

\end{lemma}

\section{Hyperbolic Knots and Invertible Concordances}\label{Hyp}

In this section we prove that there are hyperbolic knots with arbitrarily high Steenrod operations on their reduced Khovanov homology. The main theorem of this section, Theorem \ref{Hyper} from the introduction, is a direct consequence of the following that appears in \cite[Theorem 2.2(iv)]{Silver}:

\begin{theorem}

Given any knot $K \subset S^3$ there is a hyperbolic knot $H$ and a ribbon concordance from $K$ to $H$. 
\end{theorem}


The proof of Theorem \ref{Hyper} now is the same as the proof of Theorem \ref{PrimeSq} in Section \ref{Proof}. Following the notation of Section \ref{Proof}, and letting $C$ denote the composite ribbon concordance from $K_n$ to $H_n$, the following commutes:
\[
\begin{tikzcd}
\widetilde{\Kh}^{-n,0}(K_n) \arrow{r}{\Sq^n} \arrow[swap]{d}{F_C} & \widetilde{\Kh}^{0,0}(K_n) \arrow{d}{F_C} \\
\widetilde{\Kh}^{-n,0}(H_n)  \arrow{r}{\Sq^n} & \widetilde{\Kh}^{0,0}(H_n).
\end{tikzcd}
\]

\noindent\textit{Remark 2}: It was pointed out to us by Danny Ruberman that there is a stronger result possible. In Kawauchi \cite{kawauchi1989}, it is shown that for any knot $K$ there is an \emph{invertible} concordance from $K$ to a hyperbolic knot. This allows the propagation of Steenrod Squares without the injectivity results of Wilson or Levine-Zemke. See also \cite{Kim}.

\section{Satellite Knots}\label{SatelliteKnots}

In this section, we show how results from \cite{MR625818} imply Theorem \ref{Satellite}.

\begin{proof}[Proof of Theorem \ref{Satellite}]
The reader is referred to \cite{MR625818} for details of Livingston's construction. Glancing at Figure $2$ of \cite{MR625818} shows that there is a ribbon concordance from the unknot to a non-trivial knot $K'$ contained in the solid torus $S^1 \times D^2$. Now, consider the knot $K_n$ discussed in section \ref{Proof} and the satellite $S_n$ formed from $K'$ as pattern and $K_n$ as companion. Livingston shows that $S_n$ is prime. The ribbon concordance from the unknot to $K'$ gives a ribbon concordance from $K_n$ to $S_n$. The remainder of the proof goes through exactly as in the proof of Theorems \ref{PrimeSq} and \ref{Hyper}.
\end{proof}

\bibliography{Steenrod_on_prime_Revised}{}

@article {MR625818,
    AUTHOR = {Livingston, Charles},
     TITLE = {Homology cobordisms of {$3$}-manifolds, knot concordances, and
              prime knots},
   JOURNAL = {Pacific J. Math.},
  FJOURNAL = {Pacific Journal of Mathematics},
    VOLUME = {94},
      YEAR = {1981},
    NUMBER = {1},
     PAGES = {193--206},
      ISSN = {0030-8730},
   MRCLASS = {57M25 (57N10)},
  MRNUMBER = {625818},
MRREVIEWER = {R. Mandelbaum},
       URL = {http://projecteuclid.org.libproxy.uoregon.edu/euclid.pjm/1102735917},
}

@ARTICLE{2012arXiv1210.1882S,
       author = {{Seed}, Cotton},
        title = "{Computations of the Lipshitz-Sarkar Steenrod square on Khovanov homology}",
      journal = {arXiv e-prints},
     keywords = {Mathematics - Geometric Topology, Mathematics - Algebraic Topology},
         year = 2012,
        month = oct,
          eid = {arXiv:1210.1882},
        pages = {arXiv:1210.1882},
archivePrefix = {arXiv},
       eprint = {1210.1882},
 primaryClass = {math.GT},
       adsurl = {https://ui.adsabs.harvard.edu/abs/2012arXiv1210.1882S},
      adsnote = {Provided by the SAO/NASA Astrophysics Data System}
}

@ARTICLE{Kim,
       author = {{Kim}, Se-Goo},
        title = "{Invertible knot concordances and prime knots}",
      journal = {arXiv Mathematics e-prints},
     keywords = {Geometric Topology, 57M25},
         year = 2000,
        month = mar,
          eid = {math/0003034},
        pages = {math/0003034},
archivePrefix = {arXiv},
       eprint = {math/0003034},
 primaryClass = {math.GT},
       adsurl = {https://ui.adsabs.harvard.edu/abs/2000math......3034K},
      adsnote = {Provided by the SAO/NASA Astrophysics Data System}
}

@article{kawauchi1989,
author = "Kawauchi, Akio",
fjournal = "Osaka Journal of Mathematics",
journal = "Osaka J. Math.",
number = "4",
pages = "743--758",
publisher = "Osaka University and Osaka City University, Departments of Mathematics",
title = "Almost identical imitations of $(3,1)$-dimensional manifold pairs",
url = "https://projecteuclid.org:443/euclid.ojm/1200781850",
volume = "26",
year = "1989"
}

@ARTICLE{Silver,
       author = {{Silver}, Daniel S. and {Whitten}, Wilbur},
        title = "{Hyperbolic covering knots}",
      journal = {arXiv Mathematics e-prints},
     keywords = {Mathematics - Geometric Topology, 57M25, 20F34},
         year = 2005,
        month = mar,
          eid = {math/0503152},
        pages = {math/0503152},
archivePrefix = {arXiv},
       eprint = {math/0503152},
 primaryClass = {math.GT},
       adsurl = {https://ui.adsabs.harvard.edu/abs/2005math......3152S},
      adsnote = {Provided by the SAO/NASA Astrophysics Data System}
}

@ARTICLE{TylerLawsonRobertLipshitz,
       author = {{Lawson}, Tyler and {Lipshitz}, Robert and {Sarkar}, Sucharit},
        title = "{Khovanov homotopy type, Burnside category, and products}",
      journal = {arXiv e-prints},
     keywords = {Mathematics - Geometric Topology, Mathematics - Algebraic Topology},
         year = 2015,
        month = may,
          eid = {arXiv:1505.00213},
        pages = {arXiv:1505.00213},
archivePrefix = {arXiv},
       eprint = {1505.00213},
 primaryClass = {math.GT},
       adsurl = {https://ui.adsabs.harvard.edu/abs/2015arXiv150500213L},
      adsnote = {Provided by the SAO/NASA Astrophysics Data System}
}

@book {MR3122052,
    AUTHOR = {Wilson, Benjamin},
     TITLE = {Topics in {K}hovanov {H}omology},
      NOTE = {Thesis (Ph.D.)--University of California, San Diego},
 PUBLISHER = {ProQuest LLC, Ann Arbor, MI},
      YEAR = {2012},
     PAGES = {80},
      ISBN = {978-1267-79228-0},
   MRCLASS = {Thesis},
  MRNUMBER = {3122052},
       URL =
              {http://gateway.proquest.com.libproxy.uoregon.edu/openurl?url_ver=Z39.88-2004&rft_val_fmt=info:ofi/fmt:kev:mtx:dissertation&res_dat=xri:pqm&rft_dat=xri:pqdiss:3546168},
}

@article{MR542689,
author = {Kirby, Robion C and Lickorish, W B Raymond},
doi = {10.1017/S0305004100056280},
issn = {0305-0041},
journal = {Math. Proc. Cambridge Philos. Soc.},
number = {3},
pages = {437--441},
title = {{Prime knots and concordance}},
url = {https://doi-org.libproxy.uoregon.edu/10.1017/S0305004100056280},
volume = {86},
year = {1979}
}

@article{MR4041014,
author = {Levine, Adam Simon and Zemke, Ian},
doi = {10.1112/blms.12303},
issn = {0024-6093},
journal = {Bull. Lond. Math. Soc.},
number = {6},
pages = {1099--1103},
title = {{Khovanov homology and ribbon concordances}},
url = {https://doi-org.libproxy.uoregon.edu/10.1112/blms.12303},
volume = {51},
year = {2019}
}

@article{MR2018265,
author = {Schultens, Jennifer},
doi = {10.1017/S0305004103006832},
issn = {0305-0041},
journal = {Math. Proc. Cambridge Philos. Soc.},
number = {3},
pages = {539--544},
title = {{Additivity of bridge numbers of knots}},
url = {https://doi-org.libproxy.uoregon.edu/10.1017/S0305004103006832},
volume = {135},
year = {2003}
}

@article{Bleiler,
author = {Bleiler, Steven A},
issn = {0030-8730},
journal = {Pacific J. Math.},
number = {2},
pages = {249--257},
title = {{Realizing concordant polynomials with prime knots}},
url = {http://projecteuclid.org.libproxy.uoregon.edu/euclid.pjm/1102725244},
volume = {100},
year = {1982}
}

@article{MR621991,
author = {Lickorish, W B Raymond},
doi = {10.2307/1998587},
issn = {0002-9947},
journal = {Trans. Amer. Math. Soc.},
number = {1},
pages = {321--332},
title = {{Prime knots and tangles}},
url = {https://doi-org.libproxy.uoregon.edu/10.2307/1998587},
volume = {267},
year = {1981}
}

@inproceedings{MR3966803,
author = {Lipshitz, Robert and Sarkar, Sucharit},
booktitle = {Proceedings of the International Congress of Mathematicians---Rio de Janeiro 2018. Vol. II. Invited lectures},
pages = {1153--1173},
publisher = {World Sci. Publ., Hackensack, NJ},
title = {{Spatial refinements and Khovanov homology}},
year = {2018}
}

@article{Sinvariant,
author = {Lipshitz, Robert and Sarkar, Sucharit},
doi = {10.1215/00127094-2644466},
issn = {0012-7094},
journal = {Duke Math. J.},
number = {5},
pages = {923--952},
title = {{A refinement of Rasmussen's s-invariant}},
url = {https://doi-org.libproxy.uoregon.edu/10.1215/00127094-2644466},
volume = {163},
year = {2014}
}

@article {MR3230817,
    AUTHOR = {Lipshitz, Robert and Sarkar, Sucharit},
     TITLE = {A {K}hovanov stable homotopy type},
   JOURNAL = {J. Amer. Math. Soc.},
  FJOURNAL = {Journal of the American Mathematical Society},
    VOLUME = {27},
      YEAR = {2014},
    NUMBER = {4},
     PAGES = {983--1042},
      ISSN = {0894-0347},
   MRCLASS = {57M25 (55P42)},
  MRNUMBER = {3230817},
MRREVIEWER = {Nikolai N. Saveliev},
       DOI = {10.1090/S0894-0347-2014-00785-2},
       URL = {https://doi-org.libproxy.uoregon.edu/10.1090/S0894-0347-2014-00785-2},
}

@article {MR3190305,
    AUTHOR = {Hedden, Matthew and Ni, Yi},
     TITLE = {Khovanov module and the detection of unlinks},
   JOURNAL = {Geom. Topol.},
  FJOURNAL = {Geometry \& Topology},
    VOLUME = {17},
      YEAR = {2013},
    NUMBER = {5},
     PAGES = {3027--3076},
      ISSN = {1465-3060},
   MRCLASS = {57M27},
  MRNUMBER = {3190305},
       DOI = {10.2140/gt.2013.17.3027},
       URL = {https://doi-org.libproxy.uoregon.edu/10.2140/gt.2013.17.3027},
}

@article {Shuma,
    AUTHOR = {Shumakovitch, Alexander N.},
     TITLE = {Torsion of {K}hovanov homology},
   JOURNAL = {Fund. Math.},
  FJOURNAL = {Fundamenta Mathematicae},
    VOLUME = {225},
      YEAR = {2014},
    NUMBER = {1},
     PAGES = {343--364},
      ISSN = {0016-2736},
   MRCLASS = {57M27 (57M25)},
  MRNUMBER = {3205577},
MRREVIEWER = {Pedro Vaz},
       DOI = {10.4064/fm225-1-16},
       URL = {https://doi-org.libproxy.uoregon.edu/10.4064/fm225-1-16},
}

@article {MR3252965,
    AUTHOR = {Lipshitz, Robert and Sarkar, Sucharit},
     TITLE = {A {S}teenrod square on {K}hovanov homology},
   JOURNAL = {J. Topol.},
  FJOURNAL = {Journal of Topology},
    VOLUME = {7},
      YEAR = {2014},
    NUMBER = {3},
     PAGES = {817--848},
      ISSN = {1753-8416},
   MRCLASS = {57M25 (55P42 55S10)},
  MRNUMBER = {3252965},
MRREVIEWER = {Nikolai N. Saveliev},
       DOI = {10.1112/jtopol/jtu005},
       URL = {https://doi-org.libproxy.uoregon.edu/10.1112/jtopol/jtu005},
}

\bibliographystyle{plain}

\subsection*{Holt Bodish}
 Department of Mathematics\\
 University of Oregon\\
hbodish@uoregon.edu

\end{document}